\newcommand{\e}{\varepsilon}
\newtheorem{theorem}{Theorem}[section]
\newtheorem{lemma}[theorem]{Lemma}
\newtheorem{remark}[theorem]{Remark}
\newtheorem{corollary}[theorem]{Corollary}
\numberwithin{equation}{section}
\def\({\left(}
\def\){\right)}
\def\tcred{\relax}
\begin{document}
\title{A note on radial solutions to the critical Lane-Emden equation with a variable coefficient
}
\author{Daisuke Naimen${}^{*}$ and Futoshi Takahashi${}^{**}$}
\date{\small{\textit{Muroran Institute of Technology${}^{*}$, 
Osaka City University${}^{**}$
}}}
\maketitle
\begin{abstract}
In this note, we consider the following problem
\begin{equation*}
\begin{cases}
-\Delta u=(1+g(x))u^{\frac{N+2}{N-2}},\ u>0\text{ in }B,\\
u=0\text{ on }\partial B,
\end{cases}
\end{equation*}
where $N\ge3$ and $B\subset \mathbb{R}^N$ is a unit ball centered at the origin and $g(x)$ is a radial H\"{o}lder continuous function such that $g(0)=0$. 
We prove the existence and nonexistence of radial solutions by the variational method with the concentration compactness analysis and the Pohozaev identity. 
\end{abstract}

\section{Introduction}
We study the following problem
\begin{equation}
\begin{cases}
-\Delta u=(1+g(x))u^{\frac{N+2}{N-2}},\ u>0\text{ in }B\\
u=0\text{ on }\partial B,
\end{cases}\label{p}
\end{equation}
where $B\subset \mathbb{R}^N$ is a unit ball centered at the origin with $N\ge3$, $g$ is a locally H\"{o}lder continuous function in $\overline{B}$ and radial, i.e., $g(x)=g(|x|)$. 
We note that a typical case is given by $g(x)=|x|^\beta$ with $\beta\ge0$.  
We will show some existence and nonexistence results on \eqref{p}.

First let us consider the next basic problem which is extensively investigated by many authors;
\begin{equation}
\begin{cases}
-\Delta u=u^{\frac{N+2}{N-2}},\ u>0\text{ in }\Omega,\\
u=0\text{ on }\partial \Omega,
\end{cases}\label{p0}
\end{equation}
where $\Omega$ is a smooth bounded domain in $\mathbb{R}^N$ with $N\ge3$. 
Since the nonlinearity $u^{\frac{N+2}{N-2}}$ has the critical growth, as is well-known, due to the lack of the compactness of the associated Sobolev embedding $H_0^1(\Omega)\hookrightarrow L^{\frac{2N}{N-2}}(\Omega)$, 
the existence/nonexistence of solutions of \eqref{p0} becomes a very delicate and interesting question. 
In fact, in contrast to the subcritical case, we can prove that \eqref{p0} has no smooth solution if $\Omega$ is a star-shaped domain by the Pohozaev identity \cite{P} (See also \cite{BN}). 
Hence in order to ensure the existence of solutions of \eqref{p0}, we need some ``perturbation" to \eqref{p0}. 
A celebrated work in this direction is given by \cite{BN}. 
They add a lower \tcred{order} term $\lambda u^{q}$ $(1\le q<(N+2)/(N-2))$ to the critical nonlinearity $u^{\frac{N+2}{N-2}}$ (i.e., replace $u^{\frac{N+2}{N-2}}$ by $u^{\frac{N+2}{N-2}}+\lambda u^q$) 
and successfully show the existence of solutions of \eqref{p0}. 
After that, \cite{C}, \cite{BC} and \cite{Ba} prove that the topological perturbation to the domain can also induce solutions to \eqref{p0}. 
\tcred{See also \cite{Ding} \cite{Pas} for the effect of the geometric perturbation to the domain.}
Furthermore, another perturbation is found \tcred{by Ni} \cite{N}. 
He considers a variable coefficient $|x|^\alpha$ with $\alpha>0$ on $u^{\frac{N+2}{N-2}}$. 
More precisely he investigates
\begin{equation}
\begin{cases}
-\Delta u=|x|^\alpha u^{p},\ u>0\text{ in }B,\\
u=0\text{ on }\partial B,
\end{cases}\label{p1}
\end{equation}
where $\alpha>0$ and $p\in(1,\frac{N+2+2\alpha}{N-2})$. 
The crucial role of the variable coefficient $|x|^\alpha$ appears in the following compactness lemma for radially symmetric functions in $H^1_0(B)$. 
Here we define $H_r(B)$ is a subspace of $H_0^1(B)$ which consists of all radial functions.

\begin{lemma}[\cite{N}]\label{N} 
The map $u \tcred{\mapsto} |x|^m u$ from $H_r(B)$ to $L^p(B)$ is compact, for $p\in[1,\tilde{m})$ where
\[
\tilde{m}=\begin{cases} \frac{2N}{N-2-2m}\text{ if }m<\frac{N-2}{2}\\
             \infty\ \text{ otherwise }.
            \end{cases}
\]
\end{lemma}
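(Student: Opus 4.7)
The plan is to follow the standard scheme for Strauss--Ni type compactness: establish a pointwise decay estimate for radial $H^1_0$ functions, use it both to control the contribution near the origin and to upgrade Rellich compactness away from the origin, then glue by an $\varepsilon$--argument.

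First I would recall (or prove) the radial decay estimate: for every $u\in H_r(B)$ and a.e.\ $x\in B\setminus\{0\}$,
\begin{equation*}
|u(x)|\le C|x|^{-\frac{N-2}{2}}\,\|\nabla u\|_{L^2(B)}.
\end{equation*}
This is obtained by writing $u(r)=-\int_r^1 u'(s)\,ds$, using Cauchy--Schwarz against the weight $s^{N-1}$, and noting that $\int_r^1 s^{-(N-1)}\,ds\le C r^{-(N-2)}$ for $N\ge 3$. Multiplying by $|x|^m$ gives the pointwise bound
\begin{equation*}
(|x|^m |u(x)|)^p \le C^p\,\|\nabla u\|_{L^2}^p\,|x|^{(m-\frac{N-2}{2})p},
\end{equation*}
whose right-hand side is integrable on $B$ exactly when $p<\tilde m$, and integrable on $B_\delta$ with integral tending to $0$ as $\delta\to0$.

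Now let $\{u_n\}$ be bounded in $H_r(B)$; extracting a subsequence, assume $u_n\rightharpoonup u$ weakly in $H_r(B)$. Fix $\varepsilon>0$. By the calculation above applied to both $u_n-u$ and $u$, there exists $\delta>0$ (independent of $n$) such that
\begin{equation*}
\int_{B_\delta}\bigl(|x|^m|u_n-u|\bigr)^p\,dx<\varepsilon.
\end{equation*}
On the complement $B\setminus B_\delta$, the weight $|x|^m$ is bounded and the classical Rellich--Kondrachov theorem gives $u_n\to u$ strongly in $L^q(B\setminus B_\delta)$ for any $q<\frac{2N}{N-2}$. If $p<\frac{2N}{N-2}$ this is enough; otherwise I would interpolate the $L^2$-strong convergence against the uniform $L^\infty(B\setminus B_\delta)$-bound $|x|^m|u_n(x)|\le C \delta^{m-\frac{N-2}{2}}$ coming from the pointwise estimate, to obtain strong convergence of $|x|^m u_n$ to $|x|^m u$ in $L^p(B\setminus B_\delta)$ for every $p\in[1,\infty)$. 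Combining the two estimates through the triangle inequality and letting $n\to\infty$, then $\varepsilon\to0$, yields $|x|^m u_n\to |x|^m u$ in $L^p(B)$.

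The only delicate point is the uniform smallness on $B_\delta$: this is precisely where the exponent $\tilde m$ enters, and it is the reason the threshold is sharp. No other obstacle is expected, since away from the origin the result is entirely standard. I would end the argument by remarking that the endpoint $p=\tilde m$ fails because the weighted integral of $|x|^{(m-\frac{N-2}{2})p}$ diverges, so concentration at the origin can destroy compactness.
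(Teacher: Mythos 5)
The paper states this lemma as a cited result from Ni \cite{N} and gives no proof of its own, so there is no in-paper argument to compare against. Your proof is the standard Strauss--Ni argument and is correct: the radial decay bound $|u(x)|\le C|x|^{-(N-2)/2}\|\nabla u\|_{L^2}$ yields uniform smallness of $\int_{B_\delta}(|x|^m|u_n-u|)^p$ precisely for $p<\tilde m$, and on the annulus $B\setminus B_\delta$ Rellich--Kondrachov together with the $L^\infty$--$L^2$ interpolation (needed when $p\ge 2^*$) gives strong convergence, so the $\varepsilon$--gluing finishes. A tiny cosmetic point: your $L^\infty$ bound $C\delta^{m-(N-2)/2}$ on $B\setminus B_\delta$ is the correct one only when $m<(N-2)/2$; for $m\ge (N-2)/2$ the supremum is attained at $|x|=1$ and the bound is simply $C$, but either way it is uniform in $n$, which is all the interpolation step requires.
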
 

Applying this, one successfully obtains the existence of a mountain pass solution of \eqref{p1} for all  $p\in\left(1,\frac{N+2+2\alpha}{N-2}\right)$.
\tcred{The exponent} $p$ can be supercritical \tcred{(i.e., $p > \frac{N+2}{N-2}$)} if \tcred{$\alpha>0$}. 
We here note that, for the critical case, the essential point seems that $u^{\frac{N+2}{N-2}}$ has a variable coefficient which is radial and attains $0$ at the origin (see Example 2.1 in \cite{W}). 
In view of this it is an interesting question that \tcred{whether} it is possible to ensure the existence of solutions in the case where the coefficient does not attain $0$ at the origin. 
Very recently, Ai-Cowan \cite{AC} study another problem including our problem \eqref{p}. 
Applying their dynamical system approach, which is developed in \cite{A}, we can confirm the existence of radially symmetric solutions of \eqref{p} for the case $g(x)=|x|^\beta$ with $\beta\in(0,N-2)$. 
An interesting point in this case is that the coefficient $(1+g(x))$ attains the local minimum at the origin but not $0$. 
Hence we cannot apply Lemma \ref{N} directly. 
Then it is an interesting question to investigate how the coefficient can exclude the non-compactness of their nonlinearity. 
Motivated by this, we investigate \eqref{p} via the variational method. 
Our aim is to give a variational interpretation on the results in \cite{AC} and further, \tcred{to} extend their \tcred{results} to a more general coefficient which has a local minimum at the origin. 

Now in order to explain our main results, we give \tcred{an} observation to the results in \cite{AC}.
In the variational point of view, it seems better to write the right hand side of the equation of \eqref{p} as $u^{\frac{N+2}{N-2}}+g(x)u^{\frac{N+2}{N-2}}$. 
Then the first term is actually noncompact. 
On the other hand, the second one becomes compact by Lemma \ref{N} if $g(x)$ behaves like $|x|^\beta$ with $\beta>0$. 
Then we clearly expect that it would \tcred{play the role of} the subcritical perturbation $\lambda u^{q}$ with $1\le q<(N+2)/(N-2)$ in \cite{BN} mentioned above. 

Then, it is natural to consider the next more general problem. (See also the generalization in \cite{AC}.) 
\begin{equation}
\begin{cases}
-\Delta u=u^{\frac{N+2}{N-2}}+\lambda k(x) f(u),\ u>0&\text{ in }B,\\
u=0&\text{ on }\partial B\label{Q}
\end{cases}
\end{equation}
where $\lambda>0$ is a parameter 
and $k:\overline{B}\to \mathbb{R}$ and $f:\mathbb{R}\to \mathbb{R}$ satisfy some of  the next assumptions.
\begin{enumerate}
\item[(k1)] $k(x)\not\equiv 0$ is a nonnegative H\"{o}lder continuous function on $\overline{B}$ and radial, i.e., $k(x)=k(|x|)$.
\item[(k2)] $k(x)=O(|x|^{\beta})$ ($|x|\to 0$) for some $\beta>0$.
\item[(k3)] There exist constants $\gamma\ge\beta>0$ and $C,\delta>0$ such that $k(|x|)\ge C|x|^\gamma$ for all  $|x|\in(0,\delta)$.
\item[(f1)] $f(t)$ is locally H\"{o}lder continuous function on $[0,\infty]$ and $f(t)\ge0$ for all $t>0$ and $f(t)=0$ for all $t\le0$.
\item[(f2)] $\lim_{t\to 0}\frac{f(t)}{t}=0$ and $\lim_{t\to \infty}\frac{f(t)}{t^q}=0$ for  $q=(N+2+2\beta)/(N-2)$.
\item[(f3)] There exists a constant $\theta>2$ such that $f(t)t\ge \theta F(t)$ for all $t\ge0$ where $F(t):=\int_0^tf(s)ds$. 
\end{enumerate}

Now, we give our main results. 
\begin{theorem}\label{thm:c}We have the following.
\begin{enumerate}
\item[(i)] If $k,f$ satisfy (k1), (k2), (k3), (f1), (f2), (f3) and further, 
\begin{enumerate}
\item[(f4)] $\lim_{t\to \infty}\frac{f(t)}{t^p}=\infty$ for $p=\max\left\{1,\frac{2\gamma+6-N}{N-2}\right\}$,
\end{enumerate}
then \eqref{Q} admits a radially symmetric solution for all $\lambda>0$. 
\item[(ii)] If $k,f$ verify (k1), (k2), (f1), (f2), (f3) and further,
\begin{enumerate}
\item[(k4)] there exists a point $x_0\in \overline{B}$ such that $k(x_0)>0$ and,
\item[(f5)] there exists a constant $c>0$ such that $f(t)>0$ for all $t\in (0,c)$,
\end{enumerate}
then, there exists a constant $\lambda^*>0$ such that \eqref{Q} has a radially symmetric solution for all $\lambda>\lambda^*$.
\end{enumerate}
\end{theorem}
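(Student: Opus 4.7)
My plan is to realize both solutions as mountain-pass critical points of the restricted functional
\[
J_\lambda(u)=\tfrac12\int_B|\nabla u|^2\,dx-\tfrac{N-2}{2N}\int_B(u^+)^{2N/(N-2)}\,dx-\lambda\int_B k(x)F(u)\,dx
\]
on $H_r(B)$; the principle of symmetric criticality then promotes a critical point to a weak solution of \eqref{Q}. Combining the growth bound $F(t)\le\e t^2+C_\e t^{q+1}$ forced by (f2) with $q=(N+2+2\beta)/(N-2)$ and an appropriate application of Lemma \ref{N} (absorbing a factor $|x|^{\beta/(q+1)}$ into each of $q+1$ copies of $u$), the hypotheses (k1)-(k2) render the subcritical functional $u\mapsto\int_B k(x)F(u)\,dx$ sequentially weakly continuous on $H_r(B)$. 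The Sobolev term is of course not compact; the whole strategy is to push the mountain-pass level strictly below the first energy level $c^\ast:=\tfrac1N S^{N/2}$ at which concentration can occur, with $S$ the best Sobolev constant.

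\textbf{Mountain-pass geometry and a trial direction.} From (f1)-(f2), $J_\lambda(u)\ge c\|u\|^2-C\|u\|^{q+1}-C\|u\|^{2^\ast}$ on a small ball, so $0$ is a strict local minimum. To find a ``far'' point where $J_\lambda<0$, choose a nonnegative radial $u_0\in H_r(B)$ with $\int_B k(x)F(u_0)\,dx>0$: in part (ii) take $u_0$ supported in an annulus on which $k\ge c_0>0$ (provided by radial continuity and (k4)) with $0<u_0<c$, so that (f5) makes $F(u_0)$ strictly positive on a set of positive measure; in part (i) any nonnegative radial bump centered at $0$ suffices. Since (f3) forces $F(t)\ge Ct^\theta$ for large $t$, $J_\lambda(tu_0)\to-\infty$, and a mountain-pass level $c_\lambda>0$ is well defined.

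\textbf{The bound $c_\lambda<c^\ast$.} In part (i), I test against the Aubin-Talenti bubble $U_\e(x)=[N(N-2)\e^2]^{(N-2)/4}(\e^2+|x|^2)^{-(N-2)/2}$ truncated near $\partial B$. Standard expansions of $\|U_\e\|^2$ and $\|U_\e\|_{2^\ast}^{2^\ast}$ send $\max_{t\ge0}\bigl[\tfrac{t^2}{2}\|U_\e\|^2-\tfrac{t^{2^\ast}}{2^\ast}\|U_\e\|_{2^\ast}^{2^\ast}\bigr]$ to $c^\ast$, and the subcritical correction
\[
\lambda\int_B k(x)F(tU_\e)\,dx\ \gtrsim\ \lambda\int_B|x|^\gamma(tU_\e)^{p+1}\,dx,
\]
coming from (k3) and (f4), must strictly dominate the $O(\e^{N-2})$ defect. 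Splitting the integral on $|x|\le\e$ and $|x|\ge\e$ and comparing $(N-2)(p+1)$ with $N+\gamma$ shows that $p=\max\{1,(2\gamma+6-N)/(N-2)\}$ is precisely the threshold at which this correction beats the defect, giving $\max_{t\ge0}J_\lambda(tU_\e)<c^\ast$ for small $\e$. In part (ii) I argue by contradiction with the fixed $u_0$ above: if $\max_{t\ge0}J_\lambda(tu_0)\ge c^\ast$, the argmax $t_\lambda$ is trapped in a $\lambda$-independent compact subinterval of $(0,\infty)$ (lower bounded by $\tfrac{t^2}{2}\|u_0\|^2\ge c^\ast$, upper bounded by $\tfrac{t^2}{2}\|u_0\|^2\ge\tfrac{t^{2^\ast}}{2^\ast}\|u_0\|_{2^\ast}^{2^\ast}$), on which $t\mapsto\int_Bk(x)F(tu_0)\,dx$ is bounded below by some $\delta>0$ (using (f5) to get $F>0$ on $(0,\infty)$); this forces $J_\lambda(t_\lambda u_0)\to-\infty$ as $\lambda\to\infty$, a contradiction. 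Hence $c_\lambda<c^\ast$ for all $\lambda$ beyond some $\lambda^\ast>0$.

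\textbf{Concentration-compactness and positivity.} A Palais-Smale sequence at level $c_\lambda$ is bounded by (f3). A weak limit $u\in H_r(B)$ is a critical point of $J_\lambda$ by the compactness of the weighted subcritical term, and the residual $v_n:=u_n-u$ satisfies (Brezis-Lieb) $\|v_n\|^2=\|v_n^+\|_{2^\ast}^{2^\ast}+o(1)$, so either $v_n\to 0$ or $\|v_n\|^2\ge S^{N/2}$; the latter would yield $c_\lambda\ge J_\lambda(u)+\tfrac1NS^{N/2}\ge c^\ast$, contradicting the energy bound (note $J_\lambda(u)\ge0$ by (f3)). Strong convergence gives $J_\lambda(u)=c_\lambda>0$, so $u\not\equiv 0$; testing $J_\lambda'(u)$ against $u^-$ forces $u\ge0$, and elliptic regularity together with the strong maximum principle produce a positive radial classical solution of \eqref{Q}. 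I expect the main obstacle to be the delicate case-by-case asymptotic computation underlying the energy bound in part (i), where the interplay of $\gamma$ in (k3) and $p$ in (f4) must be handled with care around the sharp exponent $p=(2\gamma+6-N)/(N-2)$.
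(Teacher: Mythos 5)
Your proposal is correct and follows essentially the same route as the paper: mountain-pass geometry for the radial functional, a Palais--Smale compactness threshold at $S^{N/2}/N$ (you via Brezis--Lieb, the paper via Lions' concentration-compactness localized at the origin -- equivalent here), and test-function energy estimates pushing $c_\lambda$ below that threshold (Talenti bubble with exponent bookkeeping for (i); an annular bump with $t_\lambda$-analysis for (ii), where the paper shows $t_\lambda\to0$ directly rather than by contradiction as you do). You also correctly flag the one place the paper invests real technical effort, namely the sharp comparison of $p=\max\{1,(2\gamma+6-N)/(N-2)\}$ against the $O(\e^{N-2})$ defect, carried out in Lemmas \ref{lem:c30}--\ref{lem:c3}.
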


\begin{remark}
The hypothesis in \tcred{Theorem \ref{thm:c}} (i) permits the case where $k(x)=|x|^\beta$ for $\beta>0$ and $f(u)=u_+^{q}$ with any $q\in(\max\{1,(2\beta+6-N)/(N-2)\},(N+2+2\beta)/(N-2))$. 
The condition $q>\max\{1,(2\beta+6-N)/(N-2)\}$ is assumed to lower the mountain pass energy down to \tcred{the level for which the local compactness of the Palais-Smale sequences is valid.} 
See Lemmas \ref{lem:c30} and \ref{lem:c3} for the detail. 
On the other hand, (ii) is valid for  $f(u)=u_+^{q}$ with any $q\in(1,(N+2+2\beta)/(N-2))$.  
\end{remark}

\begin{remark} 
A similar problem is considered in \cite{CFM} and \cite{FGM}. 
The existence and nonexistence for the linear \tcred{perturbation case with} $k(r)=r^\beta$ \tcred{for} $\beta>0$ and $f(t)=t_+$ are completed by \cite{CFM}. 
Furthermore, the superlinear \tcred{perturbation case with} $k(r)=r^\beta$ \tcred{for} $\beta>0$ and $f(t)=t_+^{q}$ with $q\in(1,(N+2+2\beta)/(N-2))$ is treated in \cite{FGM}. 
Our theorem gives a generalization of a part of their \tcred{results}.
\end{remark}

A nonexistence result on \eqref{Q} is given by the Pohozaev identity as follows.
\begin{theorem}\label{thm:non0}
Let $\lambda\in\mathbb{R},$ $k(x)=|x|^\beta$ with $\beta\ge0$, $f(u)=u_+^q$ and $q\ge1$. 
Then \eqref{Q} admits no solution if one of the following is true;
\begin{enumerate}
\item[(i)] $q\in [1,(2\beta+N+2)/(N-2)]$ and $\lambda\le0$, or
\item[(ii)] $q\ge \frac{2\beta+N+2}{N-2}$ and $\lambda\ge0$, or,
\item[(iii)] $\beta=0$ and $q=(N+2)/(N-2)$.
\end{enumerate}
\end{theorem}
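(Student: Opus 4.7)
The plan is to derive the Pohozaev identity adapted to the variable-coefficient nonlinearity and read off the three cases as sign conditions. Setting
\[
F(x,u)=\frac{N-2}{2N}u^{\frac{2N}{N-2}}+\frac{\lambda}{q+1}|x|^{\beta}u_{+}^{q+1},
\]
so that the right-hand side of the equation is $\partial_uF(x,u)$, I would multiply the equation by $x\cdot\nabla u$ and integrate by parts on $B$. Using $u=0$ on $\partial B$ (which kills the boundary term in $F$) and $x\cdot\nu=1$ on $\partial B$, one obtains
\[
\int_B\Bigl[NF(x,u)+x\cdot\nabla_xF(x,u)-\tfrac{N-2}{2}\,u\partial_uF(x,u)\Bigr]dx=\tfrac12\int_{\partial B}|\nabla u|^2\,d\sigma.
\]
The critical contribution cancels exactly since $N\cdot\frac{N-2}{2N}=\frac{N-2}{2}$, while the only $x$-dependence in the perturbation comes through $|x|^\beta$ with $x\cdot\nabla_x|x|^\beta=\beta|x|^\beta$. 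After collecting terms the identity reduces to
\[
\lambda\Bigl[\tfrac{N+\beta}{q+1}-\tfrac{N-2}{2}\Bigr]\int_B|x|^{\beta}u^{q+1}\,dx=\tfrac12\int_{\partial B}|\nabla u|^2\,d\sigma.
\]

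Next I would note that the bracket $B(q,\beta):=\frac{N+\beta}{q+1}-\frac{N-2}{2}$ has the same sign as $\frac{N+2+2\beta}{N-2}-q$, and that by Hopf's lemma applied to the positive solution $u$ vanishing on the smooth boundary one has $\partial u/\partial\nu<0$ on $\partial B$, so the right-hand side is \emph{strictly} positive. The three cases are then immediate sign contradictions. In (i) the bracket is $\ge 0$ and $\lambda\le 0$, so the left-hand side is $\le 0$; if $\lambda<0$ and $q<(N+2+2\beta)/(N-2)$ the left-hand side is strictly negative, while if $\lambda=0$ or if $q$ equals the critical exponent the left-hand side vanishes, and in either situation the right-hand side cannot be matched. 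In (ii) the bracket is $\le 0$ and $\lambda\ge 0$, giving the same impossibility by symmetric reasoning. In (iii), $\beta=0$ and $q=(N+2)/(N-2)$ force $B(q,\beta)=0$ identically, so the left-hand side is $0$ regardless of $\lambda$, again contradicting the strictly positive boundary integral.

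The only technical point to verify is enough regularity for both the Pohozaev computation and the application of Hopf's lemma, namely $u\in C^2(B)\cap C^1(\overline B)$. Since $k(x)=|x|^\beta$ is H\"older continuous on $\overline B$ and $f(u)=u_+^q$ with $q\ge 1$, standard Schauder theory for positive solutions of $\eqref{Q}$ yields $u\in C^{2,\alpha}(\overline B)$, so both tools apply without approximation. I expect this to be the only non-routine point; the sign analysis itself is essentially bookkeeping once the identity is in hand.
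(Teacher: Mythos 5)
Your proof is correct and follows the same Pohozaev-identity strategy as the paper: multiply by $x\cdot\nabla u$, combine with the energy identity, and reduce to the sign of $\lambda\bigl(\tfrac{N+\beta}{q+1}-\tfrac{N-2}{2}\bigr)$, which flips exactly at $q=(N+2+2\beta)/(N-2)$. The one (minor) departure is the final contradiction: the paper proves its intermediate Theorem~\ref{thm:non2} for arbitrary star-shaped domains, where $x\cdot\nu$ may vanish on part of the boundary, so it can only conclude $|\nabla u|\equiv 0$ on $\partial\Omega$ and then invokes the unique continuation principle to force $u\equiv 0$; you work directly on the ball, where $x\cdot\nu\equiv 1$, and instead apply Hopf's boundary-point lemma to get $\int_{\partial B}|\nabla u|^2\,d\sigma>0$ outright. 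For the ball this is a slightly more direct route that avoids unique continuation, while the paper's version trades that directness for validity on all star-shaped domains. Both arguments are sound, and your Schauder remark correctly supplies the $C^{2,\alpha}(\overline B)$ regularity needed for both the Pohozaev computation and Hopf's lemma.
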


\begin{remark}
\tcred{The} same conclusion holds even if we replace the domain $B$ by any star-shaped domain. See the argument in Section \ref{sec:2}.
\end{remark}

Now we come back to our main question on \eqref{p}. 
The desired existence results are given as a corollary of (i) of Theorem \ref{thm:c}. 

\begin{corollary}\label{thm:0} We assume 
\begin{enumerate}
\item[(g1)] $g(x)$ is H\"{o}lder continuous and $g\ge-1$ on $\overline{B}$ and radial, i.e., $g(x)=g(|x|)$,
\item[(g2)] $g(0)=0$, and
\item[(g3)] there exist constants $\gamma\in(0,N-2)$, $\delta\in(0,1]$ and $C>0$ such that $g(|x|)\ge C|x|^\gamma$ for all $|x|\in(0,\delta)$.
\end{enumerate}
Then \eqref{p} admits at least one radially symmetric solution.
\end{corollary}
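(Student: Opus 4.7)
The plan is to recognize \eqref{p} as a special case of \eqref{Q} and then invoke Theorem \ref{thm:c}(i). Expanding the coefficient $(1+g(x))$, the equation in \eqref{p} rewrites as
\begin{equation*}
-\Delta u = u^{(N+2)/(N-2)} + g(x)\, u^{(N+2)/(N-2)},
\end{equation*}
which matches \eqref{Q} on setting $\lambda = 1$, $k(x) := g(x)$, and $f(t) := t_+^{(N+2)/(N-2)}$. The task then reduces to verifying that this pair $(k,f)$ satisfies the hypotheses (k1)--(k3) and (f1)--(f4) of Theorem \ref{thm:c}(i).

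For the coefficient, (g1)--(g2) immediately supply the radial H\"older regularity, sign condition, and $k(0)=0$, while (g3) is the pointwise lower bound at the origin with exponent $\gamma$. To match (k2) I will use the H\"older continuity: since $g(0)=0$, one has $|g(x)| \le C|x|^\alpha$ near the origin, where $\alpha \in (0,1]$ is the H\"older exponent of $g$. Choosing $\beta \in (0, \min\{\alpha, \gamma\}]$ gives (k2) as $g(x) = O(|x|^\beta)$ and simultaneously forces the comparison $\gamma \ge \beta$ demanded in (k3); the lower bound in (k3) is then exactly (g3).

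For the nonlinearity $f(t) = t_+^{(N+2)/(N-2)}$, condition (f1) is immediate, (f2) follows from the elementary bounds $4/(N-2) > 0$ and $(N+2)/(N-2) < (N+2+2\beta)/(N-2)$, and the Ambrosetti--Rabinowitz condition (f3) holds with any $\theta \in (2, 2N/(N-2))$ since $f(t)t = \frac{2N}{N-2} F(t)$.

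Finally, (f4) is the decisive, non-automatic condition: it requires $f(t)/t^p \to \infty$ with $p = \max\{1, (2\gamma + 6 - N)/(N-2)\}$. Since $f(t) = t^{(N+2)/(N-2)}$, in the nontrivial regime $p = (2\gamma + 6 - N)/(N-2)$ this reduces to $(N+2)/(N-2) > (2\gamma+6-N)/(N-2)$, i.e. $\gamma < N-2$, which is precisely the upper restriction built into (g3). This is the main point of the corollary: the hypothesis $\gamma \in (0, N-2)$ is exactly what drops the mountain pass level below the critical compactness threshold analyzed in Lemma \ref{lem:c3}, so the only real obstacle (the critical Sobolev non-compactness) is absorbed into the assumption on $\gamma$. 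Once (f4) is checked, Theorem \ref{thm:c}(i) applies directly and yields the desired radial solution of \eqref{p}.
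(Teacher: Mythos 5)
Your overall strategy is the same as the paper's: recognize \eqref{p} as the special case of \eqref{Q} with $\lambda=1$, $k=g$, $f(t)=t_+^{(N+2)/(N-2)}$, and invoke Theorem~\ref{thm:c}(i). The verifications of (f1)--(f4) are correct, and you are right that (f4) is the decisive point reducing to $\gamma<N-2$. Your treatment of (k2) via H\"older continuity (using $g(0)=0$ to get $|g(x)|\le C|x|^\alpha$, then taking $\beta\in(0,\min\{\alpha,\gamma\}]$) is in fact a small improvement over the paper, which instead \emph{weakens} (k2) to $k(0)=0$ and therefore has to redo Lemma~\ref{lem:c2} in Appendix~\ref{ap} without Lemma~\ref{N}; your observation sidesteps part of that.

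However, there is a genuine gap in your verification of (k1). Condition (k1) requires $k\ge 0$ on $\overline{B}$, but (g1) only gives $g\ge -1$, and $g$ is allowed to be strictly negative away from the origin (the lower bound (g3) is purely local). Your sentence asserting that ``(g1)--(g2) immediately supply the \ldots sign condition'' is therefore false as stated, and Theorem~\ref{thm:c}(i) cannot be cited verbatim. The paper handles this by pointing out that the nonnegativity of $k$ is used only in a few specific places -- to apply the maximum principle, in the inequality $\int_B kF(u)\,dx\ge 0$ in Lemmas~\ref{lem:c1}(b) and \ref{lem:c30}, and in the boundedness step of Lemma~\ref{lem:c2} -- and that in the present setting each of these survives: for $f=t_+^{2^*-1}$ the functional collapses to $I(u)=\tfrac12\|u\|^2-\tfrac{1}{2^*}\int_B(1+g)u_+^{2^*}dx$ with $1+g\ge 0$, the AR inequality is an equality with $\theta=2^*$ so the sign of $k$ is irrelevant to the coercivity estimate, and the test function $v_\e$ in Lemma~\ref{lem:c30} is supported in $B_\delta(0)$ where (g3) forces $g\ge 0$. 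A complete proof must at least flag that (k1) fails and explain why the argument is unaffected, as the paper does before referring to Appendix~\ref{ap}.
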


\begin{remark}
This theorem generalizes Theorem 2 \tcred{in \cite{AC}} for the case $g(|x|,u)=g(|x|)$.
To see this, note first that their condition (6) \tcred{in \cite{AC}} implies (g2) and (g3). 
Furthermore, since (g3) \tcred{is a condition} for the behavior of $g$ only near the origin, we \tcred{can} easily construct an example which \tcred{satisfies} (g2) and (g3)\tcred{,} but not (6). 
In addition, they prove \tcred{Theorem 2 in \cite{AC}} by dynamical system approach while we shall prove it via the variational method with the concentration compactness analysis. 
Hence our proof can give a variational interpretation and a generalization of \tcred{their theorem}. 
\end{remark}

By Corollary \ref{thm:0}, we have the existence of solution of \eqref{p} if $g(x)=\lambda|x|^\beta$ with $\beta\in(0,N-2)$ and $\lambda>0$. 
For the case including $\beta\ge N-2$, we have the next corollary \tcred{as} a direct consequence of (ii) in Theorem \ref{thm:c}. 

\begin{corollary}\label{thm:1}
Let $\lambda>0$, $g(x)=\lambda k(x)$ and $k(x)$ is a nonnegative H\"{o}lder continuous function in $\overline{B}$ such that $k(0)=0$ and $k(x)=k(|x|)$. Furthermore, assume there exists a point $x_0\in \overline{B}$ such that $k(x_0)>0$. Then there exists a constant $\lambda^*>0$ such that \eqref{p} admits at least one radially symmetric solution for all $\lambda>\lambda^*$. 
\end{corollary}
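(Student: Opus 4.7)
The plan is to realize Corollary \ref{thm:1} as a direct specialization of part (ii) of Theorem \ref{thm:c}. Setting $f(t):=t_+^{(N+2)/(N-2)}$ and recalling that $g(x)=\lambda k(x)$, the equation in \eqref{p} becomes
\[
-\Delta u = u^{(N+2)/(N-2)} + \lambda k(x) f(u),
\]
which is precisely \eqref{Q} for this choice of $f$. Consequently, any radially symmetric solution of \eqref{Q} produced by Theorem \ref{thm:c}(ii) is automatically a radially symmetric solution of \eqref{p}, and the conclusion of the corollary will follow as soon as I check that the pair $(k,f)$ verifies the list (k1), (k2), (k4), (f1), (f2), (f3), (f5).

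The assumptions of the corollary on $k$ give (k1) and (k4) at once. For (k2) I would invoke the Hölder continuity of $k$ on $\overline{B}$: denoting by $\alpha\in(0,1]$ the corresponding Hölder exponent, the hypothesis $k(0)=0$ forces $|k(x)| = |k(x)-k(0)|\le C|x|^{\alpha}$ near the origin, which is (k2) with $\beta:=\alpha>0$. For $f$, condition (f1) is immediate because $(N+2)/(N-2)>1$ makes $f$ of class $C^{1}$ on $[0,\infty)$, while (f5) is trivial with any $c>0$. Condition (f2) with $q=(N+2+2\beta)/(N-2)$ amounts to the strict inequality $(N+2)/(N-2)<q$, which follows from $\beta>0$. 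Finally, (f3) holds with $\theta=2N/(N-2)>2$ by a one-line computation of $F(t)=\frac{N-2}{2N}t^{2N/(N-2)}$.

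All the hypotheses of Theorem \ref{thm:c}(ii) being in force, one obtains a threshold $\lambda^{*}>0$ such that \eqref{Q}, and therefore \eqref{p}, has a radially symmetric solution for every $\lambda>\lambda^{*}$.

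There is no real obstacle here: the whole content of the corollary is the observation that the critical power $f(u)=u^{(N+2)/(N-2)}$ meets the generic conditions imposed on the nonlinearity in Theorem \ref{thm:c}(ii), once the correct value of $\beta$ is extracted from the Hölder regularity of $k$ at the origin. The slightly delicate point, if any, is the identification of $\beta$ with the Hölder exponent of $k$; everything else is an elementary verification.
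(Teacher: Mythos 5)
Your proof is correct and takes the same route as the paper, whose own proof is the one-liner ``immediate by (ii) of Theorem~\ref{thm:c}''; you have simply supplied the omitted verification that $f(t)=t_+^{(N+2)/(N-2)}$ together with the given $k$ satisfies (k1), (k2), (k4), (f1), (f2), (f3), (f5). The only stylistic difference worth noting is your derivation of (k2) by taking $\beta$ to be the H\"older exponent of $k$ at the origin: in the parallel Corollary~\ref{thm:0} the authors instead remark that for the critical power $f$ one may weaken (k2) to $k(0)=0$ and rerun Lemma~\ref{lem:c2} as in Appendix~\ref{ap}, but both resolutions of that subtlety yield the same conclusion.
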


\begin{remark} 
\tcred{Corollary \ref{thm:1}} implies that if $g(x)=\lambda |x|^\beta$ with $\beta>0$, a radially symmetric solution exists for all sufficiently large $\lambda>0$. 
Furthermore, we remark that \tcred{this} generalizes Theorem 1 of \cite{AC}.
\end{remark}

The existence results above are \tcred{best possible} in the following sense. 
We have the following nonexistence result.
\begin{theorem}\label{thm:non1} 
Let $g(x)=\lambda|x|^\beta$ with $\beta\ge0$ and $\lambda\in\mathbb{R}$. 
Then \eqref{p} does not admit any radially symmetric solution if $\beta=0$ and $\lambda\in \mathbb{R}$\tcred{,} or $\beta\ge0$ and $\lambda\le0$. 
In addition if $\beta \ge N-2$, there exists a constant $\lambda_*>0$ which depends on $\beta$ and $N$ such that \eqref{p} has no radially symmetric solution for all $\lambda\in[0,\lambda_*]$. 
\end{theorem}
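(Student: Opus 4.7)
The plan splits into two parts. The first group of cases---$\beta=0$ with any $\lambda \in \mathbb{R}$, or $\beta \ge 0$ with $\lambda \le 0$---follows at once from Theorem~\ref{thm:non0} applied to \eqref{Q} with $k(x) = |x|^\beta$ and $f(u) = u_+^q$ for $q = (N+2)/(N-2)$. Since $q \le (2\beta+N+2)/(N-2)$ for every $\beta \ge 0$, parts (i) and (iii) of that theorem cover precisely these configurations.

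The substance of the theorem is the last assertion. I would argue it by contradiction: if it were false there would exist a sequence $\lambda_n \to 0^+$ of positive radial solutions $u_n$ of \eqref{p} with $g(x) = \lambda_n|x|^\beta$. Writing $p = (N+2)/(N-2)$ and testing the equation against $u_n$, the Sobolev inequality gives $\|\nabla u_n\|_{L^2}^{p-1} \ge S^{(p+1)/2}/(1+\lambda_n)$, so $\|\nabla u_n\|_{L^2}$ stays above a positive constant. Set $M_n := u_n(0) = \|u_n\|_\infty$ (the radial solution is decreasing) and distinguish two alternatives.

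If $(M_n)$ remains bounded, then $\|u_n\|_{H^1}$ is bounded, and the compact embedding $H_r(B) \hookrightarrow L^q(B)$ for $q < 2^*$---combined with the uniform $L^\infty$ bound and dominated convergence---yields $u_n \to u_\infty$ strongly in $L^{p+1}(B)$ along a subsequence. The limit $u_\infty \ge 0$ weakly solves $-\Delta u_\infty = u_\infty^p$ with zero boundary values, hence $u_\infty \equiv 0$ by the classical Pohozaev identity on the star-shaped ball. Then $\int_B u_n^{p+1}\to 0$ and $\|\nabla u_n\|_{L^2}^2 \le (1+\lambda_n)\int_B u_n^{p+1}\to 0$, contradicting the lower bound.

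If instead $M_n \to \infty$, I would perform the critical rescaling $v_n(y) := \mu_n^{(N-2)/2}u_n(\mu_n y)$ with $\mu_n := M_n^{-2/(N-2)}$, so that $v_n(0) = 1$, $v_n$ is radial decreasing, and $-\Delta v_n = (1+\lambda_n\mu_n^\beta|y|^\beta)v_n^p$ on $B_{1/\mu_n}$. Since $\lambda_n\mu_n^\beta \to 0$, standard elliptic theory and the Caffarelli--Gidas--Spruck classification give $v_n \to V$ in $C^2_{\mathrm{loc}}(\mathbb{R}^N)$, where $V(y) = (1+|y|^2/(N(N-2)))^{-(N-2)/2}$. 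With this in hand I would compare two asymptotic expressions for $u_n'(1)$: direct integration of the radial equation and rescaling yield
\[
-u_n'(1) = \mu_n^{(N-2)/2}\bigl(I_V + o(1)\bigr), \qquad I_V := \int_0^\infty s^{N-1}V^p(s)\,ds > 0,
\]
while the radial Pohozaev identity produces
\[
(u_n'(1))^2 = \frac{2\lambda_n \beta}{p+1}\,\mu_n^\beta \int_0^{1/\mu_n} s^{N-1+\beta} v_n^{p+1}(s)\,ds.
\]
Equating these, the tail integral on the right is of order $1$, $\log(1/\mu_n)$, or $\mu_n^{-(\beta-N)}$ according to whether $\beta < N$, $\beta = N$, or $\beta > N$, and rearranging forces $\lambda_n \ge c > 0$ in the borderline case $\beta = N-2$ and $\lambda_n \to \infty$ whenever $\beta > N-2$, each contradicting $\lambda_n \to 0^+$. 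The main technical hurdle---and the step I expect to be the hardest---is justifying the uniform decay $v_n(r) \le C(1+r)^{-(N-2)}$ on the entire range $[0, 1/\mu_n]$, which controls both the tail integral above and the error in the formula for $u_n'(1)$; I would extract it from the radial integral representation $v_n(r) = \int_r^{1/\mu_n}\rho^{1-N}\int_0^\rho t^{N-1}(1+\lambda_n\mu_n^\beta t^\beta)v_n^p\,dt\,d\rho$ and a bootstrap argument that exploits $\int_0^\infty t^{N-1}V^p\,dt < \infty$.
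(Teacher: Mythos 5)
Your handling of the first group of cases ($\beta=0$ or $\lambda\le 0$) coincides with the paper's: both simply invoke the Pohozaev identity for the ball, i.e.\ Theorem~\ref{thm:non0}.

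For the last assertion you take a genuinely different route, and it is worth spelling out the contrast. The paper stays entirely within the Pohozaev framework, but upgrades it: it uses the \emph{weighted} radial identity \eqref{f3} of Theorem~\ref{thm:p} with a variable test function $\psi$, chooses $\psi(r)=ar^{N-1}+br$ (a solution of $r^3\psi'''-(N-1)(N-3)r\psi'+(N-1)(N-3)\psi=0$ vanishing at $0$) to annihilate the $u^2$-term, and then picks $a<0$, $b=-a$ so that $\psi(1)=0$ and the surviving integrand $h(r)$ is nonnegative and not identically zero whenever $\lambda\in[0,\lambda_*]$. This gives a one-line contradiction, is entirely elementary, and moreover produces the explicit value of $\lambda_*$ recorded in the remark after the theorem. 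Your proposal instead runs a compactness/blow-up argument: extract a sequence $\lambda_n\to 0^+$ of solutions, dichotomize on $M_n=\|u_n\|_\infty$, and in the unbounded case rescale to the standard bubble and compare two expressions for $u_n'(1)$. The mechanism is coherent, and the final arithmetic in $\beta$ is correct, but the cost is that the entire weight of the argument is pushed onto the uniform decay $v_n(r)\le C(1+r)^{-(N-2)}$, which you flag yourself. That estimate is not the routine bootstrap the sketch suggests: the Strauss-type bound for radial $H^1$ functions only yields the non-sharp rate $r^{-(N-2)/2}$, the sharp rate typically needs a Kelvin-transform or moving-spheres argument, and, more importantly, without an a priori bound on $\|\nabla u_n\|_{L^2}$ (which you do not have -- the Sobolev argument only gives a lower bound) you cannot exclude a tower of bubbles concentrating at the origin at multiple scales. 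Such a tower would destroy the convergence of $\int_0^{1/\mu_n}s^{N-1}v_n^p\,ds$ to $I_V$ and of the tail integral $J_n$ to its claimed order, which are exactly the two inputs your contradiction rests on. The decay estimate would preclude towers, but obtaining it uniformly is precisely as hard as ruling them out, so the proposal as written is circular at this point. The paper's choice of test function sidesteps all of this; the blow-up route, if you want to pursue it, would need a genuine a priori estimate lemma (in the spirit of Brezis--Peletier, Han, or Druet) inserted before the rescaling step. You should also note your argument is non-constructive, whereas the theorem's remark shows an explicit $\lambda_*$ can be extracted, which only the paper's method delivers.
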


\begin{remark}
In our computation, we can choose 
\[
\lambda_*=\begin{cases}\frac{2(N-1)}{N-2}\text{ if }\beta=N-2,\\
                       \frac{2(N-1)}{N-2}\left(\frac{2N-2+\beta}{\beta-N+2}\right)^{\frac{\beta-N+2}{N-2}}\text{ if }\beta>N-2.
        \end{cases}
\]
For the detail, see the proof of Theorem \ref{thm:non1} in Section \ref{sec:2}. \end{remark}
\subsection*{Organization of this paper}
This paper consists of three sections with an appendix. In Section \ref{sec:1}, we give the proof of the existence results. 
In Section \ref{sec:2}, we show the nonexistence assertions by the Pohozaev identity. 
Lastly in Appendix \ref{ap}, we give a remark on the proof for the critical case for \tcred{the} reader's convenience. 
Throughout this paper we define $H_r(B)$ as a subspace of $H_0^1(B)$ which consists of all the radial functions. 
Furthermore we put $2^*=2N/(N-2)$ and define the Sobolev constant $S>0$ as usual by 
\tcred{
\[
S:=\inf_{u\in H^1_0(B)\setminus\{0\}}\frac{\| u \|^2}{\int_B|u|^{2^*}dx}
\]
where $\| u \|^2 = \int_B|\nabla u|^2dx$.}
Finally we define $B_s(0)$ as a $N$ dimensional ball centered at the origin with radius $s>0$.
\section{Existence results}\label{sec:1}

In this section, we give \tcred{a} proof of the existence results of our main theorems and corollaries. 
In the following we always suppose (k1), (k2), (f1) and (f2). 
\tcred{For the problem \eqref{Q}}, we define the associated energy functional
\[
I(u)=\frac{1}{2}\|u\|^2-\frac{1}{2^*}\int_Bu_+^{2^*}dx-\int_B kF(u)dx\ \ (u\in H_r(B)).
\]
Then noting our assumptions and Lemma \ref{N}, it is standard to see \tcred{that} $I(u)$ is well-defined and continuously differentiable on \tcred{$H_r(B)$}. 
 In addition, by (k1) and (f1),  the usual elliptic theory and the strong maximum principle ensure that every critical point of $I$ is a solution of \eqref{Q}. 
Hence our aim becomes to look for critical points of $I$. We first prove the mountain pass geometry of $I$ \cite{AR}.  

\begin{lemma}\label{lem:c1} 
We have
\begin{enumerate}
\item[(a)] $\exists \rho,a>0$ such that $I(u)\ge a$ for all $u\in H_r(B)$ with $\|u\|=\rho$, and
\item[(b)] for all $u\in H_r(B)\setminus\{0\}$, $I(tu)\to -\infty$ as $t\to \infty$,
\end{enumerate} 
for all $\lambda>0$. 
\end{lemma}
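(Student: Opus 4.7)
The plan is to use the growth control from (f2) to tame the $F$--term, the weighted Sobolev embedding afforded by (k2) combined with Lemma~\ref{N} to bound $\int_B k\,F(u)\,dx$ by $\|u\|^{q+1}$ where $q=(N+2+2\beta)/(N-2)$, and then exploit $2^{*},q+1>2$ for (a) together with the non-positivity of the $F$--term for (b).

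For (a), I would first deduce from (f2) the standard quantitative bound: for every $\eta>0$ there exists $C_\eta>0$ such that $F(t)\le \eta\,t^{2}+C_\eta\,t^{q+1}$ for $t\ge 0$. Next, (k1) gives $\|k\|_\infty<\infty$ and (k2) gives $k(x)\le C|x|^\beta$ on $\overline B$ (enlarging $C$ if necessary). A direct computation shows that $m:=\beta/(q+1)$ satisfies $\tilde m=q+1$ in Lemma~\ref{N}, and
\[
\int_B |x|^\beta u_+^{q+1}\,dx=\int_B\bigl(|x|^{m}u_+\bigr)^{q+1}\,dx\le C\|u\|^{q+1}
\]
by the endpoint weighted Sobolev estimate underlying Lemma~\ref{N}. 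Combined with $\int_B k\,u_+^{2}\,dx\le C\|u\|^{2}$ and Sobolev's inequality $\int_B u_+^{2^{*}}\,dx\le S^{-2^{*}/2}\|u\|^{2^{*}}$, this gives
\[
I(u)\ge \left(\tfrac12-C\eta\right)\|u\|^{2}-C_{1}\|u\|^{2^{*}}-C_{2}C_\eta\|u\|^{q+1}.
\]
Choosing $\eta$ small so that the quadratic coefficient is positive and then $\rho=\|u\|$ small (using $2^{*}>2$ and $q+1>2$) yields (a).

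For (b), only one ray need go to $-\infty$ (the statement as written actually fails for $u\le 0$, since then $(tu)_+\equiv 0$ and $I(tu)=\tfrac{t^{2}}{2}\|u\|^{2}\to+\infty$; but for the mountain--pass geometry one such ray suffices). Picking any non-negative nonzero $u\in H_r(B)$, one has $F\ge 0$ by (f1) and $k\ge 0$ by (k1), so
\[
I(tu)\le \tfrac{t^{2}}{2}\|u\|^{2}-\tfrac{t^{2^{*}}}{2^{*}}\int_B u^{2^{*}}\,dx\ \longrightarrow\ -\infty\qquad (t\to\infty),
\]
since $2^{*}>2$ and $\int_B u^{2^{*}}\,dx>0$.

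The main technical point I expect to have to address carefully is the weighted embedding at the endpoint $q+1=\tilde m$: Lemma~\ref{N} as stated gives only compactness strictly below $\tilde m$, so continuous embedding at the endpoint itself requires a brief separate justification (e.g.\ via a Caffarelli--Kohn--Nirenberg-type weighted inequality for radial functions, or a direct Hardy--type computation). Once this endpoint continuous embedding is in place, both assertions reduce to standard mountain--pass bookkeeping.
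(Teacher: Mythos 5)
Your approach matches the paper's in structure: for (a) you split the $F$--term via (f2) into a small quadratic piece plus a superquadratic piece, control the superquadratic piece through the weighted radial embedding and the remaining critical term through Sobolev, and then take $\rho$ small; for (b) you drop the nonnegative $F$--term and let the $-t^{2^*}$ term dominate. The only genuine point of divergence is the exponent you keep in (a): the paper deduces from (f2) a bound $|f(t)|\le\e t + Ct^p$ with some $p$ \emph{strictly} below $q=(N+2+2\beta)/(N-2)$, so that Lemma \ref{N} applies verbatim; you keep the endpoint exponent $q$ itself, which, as you correctly flag, requires the continuous endpoint estimate $\int_B |x|^\beta |u|^{q+1}\,dx\le C\|u\|^{q+1}$ that Lemma \ref{N} (stated only for $p<\tilde m$) does not literally provide. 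This is the right thing to worry about, and it is worth noting that the paper's $p<q$ claim is itself not an automatic consequence of (f2) --- $f(t)=o(t^q)$ at infinity does not imply $f(t)=O(t^p)$ for any fixed $p<q$ --- so some form of the endpoint bound (or an extra mild hypothesis on $f$) is needed in either route. Your observation about (b) is also correct: for $u\le 0$ one has $(tu)_+\equiv0$ and, by (f1), $F(tu)\equiv 0$, so $I(tu)=\tfrac{t^2}{2}\|u\|^2\to+\infty$, and the assertion as literally stated fails. The paper overlooks this; it is harmless for the application since the mountain-pass construction only needs a single $e$ with $I(e)\le 0$, but strictly speaking the lemma should assert (b) only for $u$ with $u_+\not\equiv 0$.
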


\begin{proof}
First note that by (f1) and (f2), we have that for any $\e>0$, there exists a constant $C>0$ such that $|f(t)|\le \e t+Ct^p$ for all $t\ge0$ and some $p\in(1,(N+2+2\beta)/(N-2))$.
Then Lemma \ref{N} and the Sobolev inequality \tcred{give} 
\[
I(u)\ge \left(\frac{1}{2}-\frac{\lambda \e}{\mu_1}\right)\|u\|^2-\lambda C\|u\|^{p+1}-C\|u\|^{2^*}
\]
for all $u\in H_r(B)$. 
Taking $\e\in(0,\mu_1/(4\lambda))$, we get (a) for all $\lambda\in(0,\infty)$. 

Next, since $k(x)f(u)\ge0$ for all $x\in \overline{B}$ and $u\in \mathbb{R}$, 
we have for all $t>0$ and $u\in H_r(B)\setminus\{0\}$ that
\[
I(tu)\le \frac{t^2}{2}\|u\|^2-\frac{t^{2^*}}{2^*}\int_Bu_+^{2^*}dx.
\]
Since $2<2^*$, we obtain $I(tu)\to -\infty$ as $t\to \infty$, which shows (b). 
This finishes the proof.
\end{proof}

Noting Lemma \ref{lem:c1}, we define  
\[
\Gamma:=\{\gamma\in C([0,1],H_r(B))\ |\ \gamma(0)=0,\ \gamma(1)=e\}
\]
with $e\in H_r(B)$ satisfying $\|e\|>\rho$ and $I(e)\le0$. Then we put 
\[
c_\lambda:=\inf_{\gamma\in \Gamma}\max_{u\in\gamma([0,1])}I(u).
\]
We next show the local compactness property of the Palais-Smale sequences of $I$. 
Here\tcred{, as usual,} we call $(u_n)\subset H_r(B)$ is a (PS)$_c$ sequence for $I$ 
if $I(u_n)\to c$ for some $c\in \mathbb{R}$ and $I'(u_n)\to0$ in $H_r^{-1}(B)$ as $n\to \infty$ where $H_r^{-1}(B)$ is \tcred{the} dual space of $H_r(B)$. 

\begin{lemma}\label{lem:c2} 
Suppose $f$ satisfies (f3) and $\lambda>0$. 
\tcred{If} $(u_n)\subset H_r(B)$ is a (PS)$_c$ sequence for a value $c<S^{N/2}/N$, \tcred{then} $(u_n)$ has a subsequence which strongly converges in $H_r(B)$ as $n\to \infty$. 
\end{lemma}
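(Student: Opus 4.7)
My strategy is the classical Brezis--Nirenberg argument: the only potential source of non-compactness in $I$ comes from the critical power $u_+^{2^*}$, while the perturbation $\int_B kF(u)\,dx$ is compact on $H_r(B)$ thanks to (k2), (f2) and Lemma~\ref{N}. To begin, I would establish an a priori bound for the $(\mathrm{PS})_c$ sequence $(u_n)$. Choosing $\mu:=\min\{\theta,2^*\}>2$, one computes
\[
I(u_n)-\tfrac{1}{\mu}\langle I'(u_n),u_n\rangle\ge\Bigl(\tfrac{1}{2}-\tfrac{1}{\mu}\Bigr)\|u_n\|^2+\Bigl(\tfrac{1}{\mu}-\tfrac{1}{2^*}\Bigr)\int_B(u_n)_+^{2^*}dx,
\]
both coefficients being nonnegative and the weighted integral $\int_B k[\tfrac{1}{\mu}f(u_n)u_n-F(u_n)]dx\ge 0$ by (f3); since the left hand side equals $c+o(1)+o(\|u_n\|)$, boundedness of $(u_n)$ in $H_r(B)$ follows.

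Passing to a subsequence, $u_n\rightharpoonup u$ weakly in $H_r(B)$, $u_n\to u$ almost everywhere on $B$ and strongly in every $L^p(B)$ with $p<2^*$. The crucial compactness input is that the weighted Nemytskii operator $u\mapsto k(x)f(u)$ is compact on $H_r(B)$: since $k(x)=O(|x|^\beta)$ near the origin and $f(t)=o(t^q)$ at infinity with $q=(N+2+2\beta)/(N-2)$, Lemma~\ref{N} (applied with a suitable exponent $m=\beta/(q+1)$) places the problem strictly inside the compact range. Consequently $\int_B kF(u_n)\,dx\to\int_B kF(u)\,dx$ and $\int_B kf(u_n)\varphi\,dx\to\int_B kf(u)\varphi\,dx$ for every $\varphi\in H_r(B)$, so $u$ is a critical point of $I$. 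Testing the equation against $u$ itself and using (f3) then yields $I(u)\ge 0$.

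Setting $v_n:=u_n-u$, the orthogonality $\|u_n\|^2=\|u\|^2+\|v_n\|^2+o(1)$ together with the Brezis--Lieb lemma $\int_B(u_n)_+^{2^*}dx=\int_B u_+^{2^*}dx+\int_B(v_n)_+^{2^*}dx+o(1)$, combined with the compactness just observed and with $\langle I'(u),u\rangle=0$, leads to
\[
\|v_n\|^2-\int_B(v_n)_+^{2^*}dx=o(1),\qquad \tfrac{1}{2}\|v_n\|^2-\tfrac{1}{2^*}\int_B(v_n)_+^{2^*}dx\to c-I(u).
\]
Writing $\|v_n\|^2\to L$, the Sobolev inequality forces $L\ge SL^{2/2^*}$, so either $L=0$, in which case $u_n\to u$ strongly in $H_r(B)$ as claimed, or $L\ge S^{N/2}$. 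The latter would give $c=I(u)+L/N\ge S^{N/2}/N$, contradicting the hypothesis $c<S^{N/2}/N$; hence $L=0$.

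The most delicate point is verifying the compactness of the weighted Nemytskii operator invoked in the second paragraph, since that is where the strict subcriticality $q<(N+2+2\beta)/(N-2)$ afforded by (f2), the behaviour $k(x)=O(|x|^\beta)$ near the origin from (k2), and the radial compactness of Lemma~\ref{N} must all conspire. Checking that the chosen $m$ puts the target exponent $p=q+1$ strictly below $\tilde m$ is a bookkeeping task that needs some care, but once it is done the remaining steps are a by-now standard Brezis--Lieb splitting below the first critical energy level.
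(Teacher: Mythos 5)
Your argument is correct, but it follows a genuinely different route from the paper's. The paper's proof is built on Lions' concentration--compactness lemma: one shows that the defect measures $d\mu,d\nu$ for a radial Palais--Smale sequence can only concentrate at the origin, tests the derivative $I'(u_n)$ against $u_n\phi$ for a cut-off $\phi$ supported near $0$ to derive $\mu_0\le\nu_0$, and then the reverse Sobolev relation $S\nu_0^{2/2^*}\le\mu_0$ forces $\nu_0\ge S^{N/2}$, which with (f3) pushes the level $c$ up to $S^{N/2}/N$ and gives the contradiction. You instead use the Brezis--Lieb splitting $v_n=u_n-u$: after verifying that the weak limit $u$ is a critical point (via compactness of the weighted term and weak convergence of $(u_n)_+^{2^*-1}$ in $L^{2N/(N+2)}$), you combine $\langle I'(u),u\rangle=0$ with (f3) to get $I(u)\ge 0$, and then the two Brezis--Lieb identities together with the Sobolev inequality force either $\|v_n\|^2\to 0$ or $c\ge S^{N/2}/N$. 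Both proofs are sound and standard; yours avoids invoking the concentration--compactness lemma at the price of needing the generalized Brezis--Lieb lemma for $j(t)=t_+^{2^*}$ (not merely for $|t|^{2^*}$), while the paper's proof exploits the radial geometry to localize concentration at the origin and does not need to establish $I(u)\ge 0$ as a separate step. One small detail worth flagging: your bookkeeping with $m=\beta/(q+1)$ puts $p=q+1$ exactly at the endpoint $\tilde m$ when $q$ equals the threshold $(N+2+2\beta)/(N-2)$; it is the strict ``little-$o$'' in (f2), as in the paper's display \eqref{eq:d}, that lets one work with a strictly smaller exponent $s$ and thus stay inside the compact range of Lemma \ref{N}.
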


\begin{proof}
By (f3), we obtain that 
\[
\begin{split}
c+o(1)&=I(u_n)-\frac{1}{\min\{2^*,\theta\}}\langle I'(u_n),u_n\rangle+o(1)\|u_n\|\\
        &\ge \left(\frac{1}{2}-\frac{1}{\min\{2^*,\theta\}}\right)\|u_n\|^2+o(1)\|u_n\|
\end{split}
\]
This shows the claim. Hence noting (f1), (f2) and Lemma \ref{N}, we have that, up to a subsequence, there exists a nonnegative function $u\in H_r(B)$ such that
\begin{equation}
\begin{cases}
u_n\rightharpoonup u\text{ weakly in }H_0^1(B),\\
\int_B kf(u_n)dx\to \int_Bkf(u)dx,\\
\int_B r^\beta |u_n|^{s+1}dx\to \int_Br^\beta |u|^{s+1}dx\text{ for any }s\in[1,(N+2+2\beta)/(N-2)),\\
u_n\to u\text{ a.e. on }B,
\end{cases} \label{eq:d}
\end{equation}
as $n\to \infty$. 
Furthermore, since $(u_n)\subset H_r(B)$, the concentration compactness lemma (Lemma I.1 in \cite{L}) implies that there exist values \tcred{$\nu_0$}, $\mu_0\ge0$ such that
\[
\begin{split}
&|\nabla u_n|^2 \rightharpoonup d\mu \ge |\nabla u|^2+\mu_0\delta_0,\\
&(u_n)_+^{2^*}\rightharpoonup d\nu =u^{2^*}+\nu_0 \delta_{0},
\end{split} 
\]
in the measure sense where $\delta_0$ denotes the Dirac measure with mass $1$ which concentrates at $0\in \mathbb{R}^N$ and 
\begin{equation}
S\nu_0^{\frac{2}{2^*}}\le \mu_0.\label{eq:d1}
\end{equation}
Let us show $\nu_0=0$. If not, we define a smooth test function $\phi$ in $\mathbb{R}^N$ such that $\phi=1$ on $B(0,\varepsilon)$, $\phi=0$ on $B(0,2\varepsilon)^c$ and $0\le \phi\le1$ otherwise. We also assume $|\nabla \phi|\le 2/\varepsilon$. Then noting (f1), (f2) and using (k1), (k2), \eqref{eq:d} and Lemma \ref{N}, we get 
\[
\begin{split}
0&=\lim_{n\to \infty}\langle I'(u_n),u_n\phi\rangle\\
&=\lim_{n\to \infty}\left(\int_B\nabla u_n \tcred{\cdot} \nabla (u_n\phi)dx-\int_B (u_n)_+^{2^*}\phi dx-\lambda\int_B kf(u_n)u_n\phi dx\right)\\
&=\lim_{n\to \infty}\left(\int_B|\nabla u_n|^2\phi dx-\int_B (u_n)_+^{2^*}\phi dx-\lambda\int_B kf(u_n)u_n\phi dx+\int_B u_n\nabla u_n \tcred{\cdot} \nabla \phi dx\right)\\
&= \int_{\overline{B}}\phi d\mu-\int_{\overline{B}}\phi d\nu+o(1)
\end{split}
\]
where $o(1)\to 0$ as $\varepsilon\to 0$. It follows that
\[
0\ge \mu_0-\nu_0.
\]
Then by \eqref{eq:d1}, we obtain
\[
\nu_0\ge S^{\frac{N}{2}}.
\]
Using this estimate, we have by (f3) that
\[
\begin{split}
c&=\lim_{n\to \infty}\left(I(u_n)-\frac12\langle I'(u_n),u_n\rangle\right)\\
 &\ge\frac{1}{N}\lim_{n\to \infty}\int_{\overline{B}}d\nu\\
  &\ge \frac{S^{\frac{N}{2}}}{N}
\end{split}
\]
which contradicts our assumption. It follows that
\[
\lim_{n\to \infty}\int_B (u_n)_+^{2^*}dx= \int_B u^{2^*}dx.
\]
Then the usual argument proves $u_n\to u$ in $H_r(B)$. We finish the proof.
\end{proof}

Next we estimate the mountain pass energy $c_\lambda$. 
To do this, we use the Talenti function $U_\e(x):=\frac{\e^{\frac{N-2}{2}}}{(\e^2+|x|^2)^{\frac{N-2}{2}}}$ \cite{Ta}. 
Moreover we define a cut off function $\psi \in C^{\infty}_0(B)$ such that $\psi(x)=\psi(|x|)$, $\text{supp}\{\psi\}\subset B_{\delta}(0)$ and $\psi=1$ on $B_{\eta}(0)$ for some  $\eta\in(0,\delta)$. 
We set $u_\e:=\psi U_\e$ and $v_\e:=u_\e/\|u_\e\|_{L^{2^*}(B)}\in H_r(B)$. 
Then, if $q>\max(2\gamma+6-N)/(N-2)$, a similar calculation with that in \cite{BN} \tcred{shows} that
\begin{equation}
\begin{cases}
\|v_\e\|^2=S+O(\e^{N-2})\\
\|v_\e\|_{L^{2^*}(B)}=1,\\
\int_Bkv_\e^{q+1}dx\ge C\int_B|x|^\gamma v_\e^{q+1}dx= C' \e^{a}+O(\e^{N-2})
\end{cases}\label{v}
\end{equation}
where $a=\gamma+N-\frac{(N-2)(q+1)}{2}$ and $C,C'>0$ are constants. 
Let us prove the next lemma (Cf. Lemma 2.1 in \cite{BN}.)

\begin{lemma}\label{lem:c30}
Assume that $k$ verifies (k3). Then if  
\begin{equation}
\lim_{\e\to0}\e^{\gamma+2}\int_0^{\e^{-1}} F\left[\left(\frac{\e^{-1}}{1+r^2}\right)^{\frac{N-2}{2}}\right]r^{\gamma+N-1}dr=\infty\label{eq:c30}
\end{equation}
holds, we have $c_\lambda<\tcred{S^{N/2}/N}$ for all $\lambda>0$.
\end{lemma}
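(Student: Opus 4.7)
The plan is the classical Brezis-Nirenberg test-function argument applied to $v_\e = u_\e/\|u_\e\|_{L^{2^*}(B)}$. By Lemma \ref{lem:c1}(b) and the uniform control on $v_\e$ as $\e \to 0$, one can choose $T > 0$ independent of small $\e$ with $I(Tv_\e) \le 0$, so that the reparametrized path $s \mapsto sT v_\e$ lies in $\Gamma$. Hence
\[
  c_\lambda \ \le\ \max_{t \in [0,T]} I(tv_\e) \ \le\ \max_{t \ge 0} g_\e(t) - \lambda \int_B k(x) F(t_\e v_\e)\,dx,
\]
where $g_\e(t) := \tfrac{t^2}{2}\|v_\e\|^2 - \tfrac{t^{2^*}}{2^*}$ (using $\|v_\e\|_{L^{2^*}(B)} = 1$) and $t_\e \in [0,T]$ denotes a point at which $t \mapsto I(tv_\e)$ attains its maximum.

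From \eqref{v}, an elementary computation gives
\[
  \max_{t \ge 0} g_\e(t) \ =\ \frac{1}{N}\|v_\e\|^{N} \ =\ \frac{S^{N/2}}{N} + O(\e^{N-2}).
\]
The mountain pass geometry (Lemma \ref{lem:c1}(a)) combined with the trivial bound $I(t_\e v_\e)\le g_\e(t_\e)$ forces $t_\e \in [t_1,t_2] \subset (0,\infty)$ with $t_1,t_2$ independent of small $\e$. The heart of the proof is then a lower bound on $\int_B k F(t_\e v_\e)\,dx$. Using (k3), the monotonicity of $F$ on $[0,\infty)$ (a consequence of (f1)), the explicit form $v_\e = \psi U_\e/\|u_\e\|_{L^{2^*}(B)}$ and the change of variable $|x| = \e r$, one arrives at
\[
  \int_B k F(t_\e v_\e)\,dx \ \ge\ C\,\e^{\gamma+N} \int_0^{\eta/\e} F\!\left(c_0 \left(\frac{\e^{-1}}{1+r^2}\right)^{(N-2)/2}\right) r^{\gamma+N-1}\,dr,
\]
for constants $C,c_0,\eta>0$ that do not depend on $\e$.

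Since $\e^{\gamma+N} = \e^{N-2}\cdot \e^{\gamma+2}$, this lower bound is $C\,\e^{N-2}$ times a quantity which, by hypothesis \eqref{eq:c30}, tends to infinity as $\e \to 0$. Combining the two estimates yields
\[
  c_\lambda \ \le\ \frac{S^{N/2}}{N} + O(\e^{N-2}) \,-\, C\lambda\,\e^{N-2}\,H(\e),
\]
with $H(\e) \to \infty$, so the desired strict inequality $c_\lambda < S^{N/2}/N$ holds for $\e$ sufficiently small. The main technical obstacle is the bookkeeping in the last step: the constant $c_0$ that appears inside $F$ after rescaling depends on $t_1$ and on the normalizer $\|u_\e\|_{L^{2^*}(B)}$, whereas \eqref{eq:c30} is phrased with $c_0 = 1$. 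One has to absorb $c_0$, for instance via the scaling $\tilde \e := c_0^{-2/(N-2)}\,\e$, under which $c_0 (\e^{-1}/(1+r^2))^{(N-2)/2} = (\tilde\e^{-1}/(1+r^2))^{(N-2)/2}$, so that \eqref{eq:c30} applied along $\tilde\e \to 0$ delivers the required divergence.
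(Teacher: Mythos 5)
Your proposal follows the paper's proof step for step: bound $c_\lambda\le I(t_\e v_\e)$ at the fiberwise maximum point $t_\e$, dominate the non-nonlinear part by $\max_{t\ge0}g_\e(t)=\tfrac{1}{N}\|v_\e\|^N=S^{N/2}/N+O(\e^{N-2})$, argue that $t_\e$ stays in a compact subset of $(0,\infty)$, and then reduce the lower bound on $\int_BkF(t_\e v_\e)\,dx$ to the hypothesis \eqref{eq:c30} via (k3), monotonicity of $F$, the substitution $|x|=\e r$, and a rescaling of $\e$ that absorbs the bounded constant appearing inside $F$. All of this matches the paper's argument.

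There is, however, a small gap in the final step that the paper addresses and you do not. After the rescaling $\tilde\e=c_0^{-2/(N-2)}\e$, the integral you obtain runs over $(0,D/\tilde\e)$ with $D=\eta\,c_0^{-2/(N-2)}$, not over $(0,\tilde\e^{-1})$ as \eqref{eq:c30} requires; since $\eta<1$ there is no guarantee that $D\ge1$, so \eqref{eq:c30} cannot be invoked directly. The paper closes this by writing $\int_0^{D/\tilde\e}=\int_0^{\tilde\e^{-1}}-\int_{D/\tilde\e}^{\tilde\e^{-1}}$ and using (f2) (so that $F(t)=o(t^2)$ as $t\to0$) to show that the subtracted tail, after multiplication by $\tilde\e^{\gamma+2}$, is $o(1)$; the divergence of the truncated integral then follows from \eqref{eq:c30}. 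Your argument needs this patch to be complete.
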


\begin{proof} Let $v_\e\in H_r(B)$ as above. 
Then from Lemma \ref{lem:c1}, we find a constant $t_\e>0$ such that $I(t_\e v_\e)=\max_{t\ge0}I(tv_\e)$. 
Since 
\[
	\tcred{0=\frac{d}{dt}|_{t=t_\e}I(tv_\e)= t_\e \|v_\e\|^2 - t_\e^{2^*-1} - \int_B kf(t_\e v_\e) v_\e dx}
\]
and \tcred{$\int_B k f(v_\e) v_\e dx\ge0$} by (k1) and (f1), we have 
\[
t_\e\le \|v_\e\|^{\frac{2}{2^*-2}}=:T_\e. 
\]
Since $\tcred{T_\e}=\|v_\e\|^{\frac{2}{2^*-2}}$ is the maximum point of the map $t\mapsto \frac{t^2}{2}\|v_\e\|^2-\frac{t^{2^*}}{2^*}$, 
we get by \eqref{v} that for any $t>0$
\[
\begin{split}
I(t v_\e)&\le I(t_\e v_\e)\\
           &\le \frac{T_\e^2}{2}\|v_\e\|^2-\frac{T_\e^{2^*}}{2^*}-\int_B kF(t_\e v_\e)dx\\
          &\le  \frac{S^{\frac{N}{2}}}{N}-\int_B kF(t_\e v_\e)dx+O(\e^{N-2}).
\end{split}
\]
Therefore once we prove
\begin{equation}
\lim_{\e\to0}\e^{-(N-2)}\int_BkF(t_\e v_\e)dx=\infty,\label{eq:c00}
\end{equation}
we conclude $c_\lambda\le I(t_\e v_\e)<S^{N/2}/N$ for all small $\e>0$. This completes the proof. 
Lastly let us ensure \eqref{eq:c00}. 
To do this, we first claim that \tcred{$\lim_{\e\to0}t_\e \to S^{(N-2)/4}$}.
Indeed, using (f2), for any $\delta>0$, there exists a constant $C_\delta>0$ such that  
\[
\int_B \frac{kf(t_\e v_\e)v_\e}{t_\e}dx\le t_\e^{q-1}\delta\int_B |x|^\beta v_\e^{q+1}dx+C_\delta \int_B|x|^\beta v_\e^2dx. 
\]
Since $t_\e\le T_\e=O(1)$, $\int_B |x|^\beta v_\e^{\tcred{q+1}}dx=O(1)$ by $q=(N+2+2\beta)/(N-2)$ and $\int_B|x|^\beta v_\e^2dx=o(1)$ as $\e\to0$, we get 
\[
\lim_{\e\to0}\int_B \frac{kf(t_\e v_\e)v_\e}{t_\e}dx=0.
\]
Then since $\langle I'(t_\e v_\e),v_\e \rangle=0$, \tcred{we have}
\[
t_\e=\left(\|v_\e\|^2-\int_B\frac{kf(t_\e v_\e)v_\e}{t_\e} dx\right)^{\tcred{\frac{1}{2^*-2}}}.
\]
\tcred{This with \eqref{v} proves} the claim. 
In particular, $t_\e$ converges to a positive value as $\e\to0$. Now we calculate by (k3) that 
\[
\begin{split}
\e^{-(N-2)}\int_BkF(t_\e v_\e)dx&\ge C_1\e^{-(N-2)}\int_{0}^\eta  F\left[t_\e\left(\frac{\e}{\e^2+r^2}\right)^{\frac{N-2}{2}}\right]r^{\gamma+N-1}dr\\
&\ge C_2\e^{\gamma+2}\int_0^{\frac{\eta}{\e}} F\left[t_\e\left(\frac{\e^{-1}}{1+r^2}\right)^{\frac{N-2}{2}}\right]r^{\gamma+N-1}dr\\
&\ge C_3\e^{\gamma+2}\int_0^{\frac{D}{\e}}F\left[\left(\frac{\e^{-1}}{1+r^2}\right)^{\frac{N-2}{2}}\right]r^{\gamma+N-1}dr
\end{split}
\]
for some constant $C_1,C_2,C_3,D>0$ where in the last inequality we replace $\e/t_\e^{(N-2)/2}$ by $\e$ which does not change the conclusion below. 
If $D\ge1$, we clearly get \eqref{eq:c00} by our assumption \eqref{eq:c30}. 
If $D<1$, we obtain
\[
\begin{split}
\e^{\gamma+2}\int_0^{\frac{D}{\e}}F\left[\left(\frac{\e^{-1}}{1+r^2}\right)^{\frac{N-2}{2}}\right]r^{\gamma+N-1}dr=
&\e^{\gamma+2}\int_0^{\frac{1}{\e}}F\left[\left(\frac{\e^{-1}}{1+r^2}\right)^{\frac{N-2}{2}}\right]r^{\gamma+N-1}dr\\
&-\e^{\gamma+2}\int_{\frac{D}{\e}}^{\frac{1}{\e}}F\left[\left(\frac{\e^{-1}}{1+r^2}\right)^{\frac{N-2}{2}}\right]r^{\gamma+N-1}dr.
\end{split}
\]
Finally, note that (f2) shows 
\[
\e^{\gamma+2}\int_{\frac{\tcred{D}}{\e}}^{\frac{1}{\e}}F\left[\left(\frac{\e^{-1}}{1+r^2}\right)^{\frac{N-2}{2}}\right]r^{\gamma+N-1}dr=o(1)
\]
where $o(1)\to0$ as $\e\to0$. This finishes the proof.
\end{proof}

The next lemma \tcred{confirms} that under our \tcred{assumptions}, $f(t)$ satisfies \eqref{eq:c30}.

\begin{lemma}\label{lem:c3} 
Assume (k3). Then, if $f$ satisfies (f4), then \eqref{eq:c30} holds true.
\end{lemma}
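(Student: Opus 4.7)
The plan is to bound the integral in \eqref{eq:c30} from below by restricting to a fixed compact subinterval and using (f4) to compare $F$ with a pure power of its argument. The decisive point is an algebraic identity showing that the remaining power of $\e$ is nonpositive; this identity is exactly what the choice $p = \max\{1,(2\gamma+6-N)/(N-2)\}$ in (f4) is designed to produce.

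Write $g_\e(r):=\bigl(\e^{-1}/(1+r^2)\bigr)^{(N-2)/2}$. First I would upgrade (f4) from an $f$-statement to an $F$-statement: given $M>0$, pick $T_M$ with $f(s)\ge M s^p$ for $s\ge T_M$; integrating from $T_M$ to $t$ yields $F(t)\ge \tfrac{M}{p+1}(t^{p+1}-T_M^{p+1})$, hence $F(t)\ge \tfrac{M}{2(p+1)} t^{p+1}$ for all $t$ sufficiently large (depending on $M$). Since $g_\e(r)\ge (\e^{-1}/2)^{(N-2)/2}\to\infty$ uniformly for $r\in[0,1]$ as $\e\to 0$, for every $M>0$ there exists $\e_M>0$ with
\[
F(g_\e(r))\;\ge\;\tfrac{M}{2(p+1)}\,g_\e(r)^{p+1}\qquad\text{for all }r\in[0,1],\ \e\in(0,\e_M].
\]

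Next, restricting the integration in \eqref{eq:c30} to $[0,1]$ (legitimate because the integrand is nonnegative by (f1)) and setting $\alpha:=(N-2)(p+1)/2$, the left-hand side of \eqref{eq:c30} is bounded below by
\[
\frac{M\,\e^{\gamma+2-\alpha}}{2(p+1)}\int_0^1\frac{r^{\gamma+N-1}}{(1+r^2)^{\alpha}}\,dr\;=:\;\frac{MC_0}{2(p+1)}\,\e^{\gamma+2-\alpha},
\]
where $C_0>0$ is a fixed constant. The crucial algebraic check is that
\[
\gamma+2-\alpha\;=\;\gamma+2-\tfrac{(N-2)(p+1)}{2}\;\le\;0,
\]
which is equivalent to $p\ge (2\gamma+6-N)/(N-2)$, and this holds by the very definition of $p$ in (f4). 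Consequently $\e^{\gamma+2-\alpha}\ge 1$ for $\e\in(0,1]$, so the lower bound is at least $MC_0/(2(p+1))$ for all small $\e$. Since $M>0$ is arbitrary, letting $\e\to 0$ and then $M\to\infty$ forces the limit in \eqref{eq:c30} to equal $+\infty$.

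The only delicate moment is the exponent computation above; this is precisely why $p$ is set to the maximum of $1$ and $(2\gamma+6-N)/(N-2)$ in (f4). Everything else is a routine change of variables together with the elementary observation that an eventually ``super-$t^p$'' function has an eventually ``super-$t^{p+1}$'' primitive. Note that hypothesis (k3) does not appear explicitly in this step: it has already been absorbed into the very formulation of \eqref{eq:c30} in the preceding Lemma~\ref{lem:c30}.
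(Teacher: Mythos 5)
Your proof is correct and follows the paper's strategy: upgrade (f4) to a lower bound $F(t)\ge \tfrac{M}{2(p+1)}t^{p+1}$ for large $t$, restrict the integral to a region where the argument of $F$ is large, and observe via the definition of $p$ that the residual power $\gamma+2-\tfrac{(N-2)(p+1)}{2}$ is nonpositive, so that letting $M\to\infty$ finishes. The only difference is the truncation region: the paper truncates at $r\le C\e^{-1/2}$ (the largest region where $g_\e\ge R$), whereas you truncate to the fixed interval $[0,1]$ (where $g_\e\to\infty$ uniformly as $\e\to 0$), which turns the reduced integral into a fixed positive constant $C_0$ and cleanly sidesteps any discussion of whether the truncated integral itself grows as $\e\to 0$.
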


\begin{proof} By (f4), for any $M>0$, there exists a constant $R>0$ such that $f(t)\ge M t^{p}$ where $p=\max\{1,\frac{2\gamma+6-N}{N-2}\}$. 
Furthermore, note that if $r\le C\e^{-1/2}$ for $C=(2R)^{-(N-2)/2}$, we get
\[
\left(\frac{\e^{-1}}{1+r^2}\right)^{\frac{N-2}{2}}\ge R
\]
for all small $\e>0$. It follows that
\[
\begin{split}
\e^{\gamma+2}\int_0^{\e^{-1}}F\left[\left(\frac{\e^{-1}}{1+r^2}\right)^{\frac{N-2}{2}}\right] &r^{\gamma +N-1}dr 
\ge \e^{\gamma+2}\int_0^{C\e^{-\frac12}}F\left[\left(\frac{\e^{-1}}{1+r^2}\right)^{\frac{N-2}{2}}\right]r^{\gamma +N-1}dr\\
&\ge \e^{\gamma+2} \frac{M}{p+1} \int_0^{C\e^{-\frac12}}\left(\frac{\e^{-1}}{1+r^2}\right)^{\frac{(N-2)(p+1)}{2}}r^{\gamma +N-1}dr\\
&\to \infty
\end{split}
\]
as $\e\to0$. This completes the proof.
\end{proof}

\begin{lemma}\label{lem:c4} 
If $k,f$ satisfy (k4) and (f5), we have a constant $\lambda^*>0$ such that $c_\lambda<S^{N/2}/N$ for all $\lambda>\lambda^*$.
\end{lemma}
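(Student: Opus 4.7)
The plan is to exhibit a single $\lambda$-independent ray $\{tu_0 : t \in [0, T]\}$ that serves as an admissible mountain-pass path, and to drive the maximum of $I_\lambda$ along this ray strictly below $S^{N/2}/N$ by taking $\lambda$ sufficiently large. Since $\lambda$ enters $I_\lambda$ only through the ``good'' term $-\lambda \int_B kF(u)\,dx$, this term should kill the height of any fixed ray once $\lambda$ is large, provided the test function $u_0$ is chosen so that $\int_B kF(tu_0)\,dx$ stays strictly positive on the relevant $t$-interval.

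First I would construct $u_0$. Using (k1), (k4) and the H\"older continuity of $k$, one finds a radial annulus $A=\{r_1<|x|<r_2\}\subset B$ on which $k\ge k_0>0$. Pick any nonnegative, nontrivial radial $u_0\in H_r(B)$ with $\mathrm{supp}\,u_0\subset A$. Hypothesis (f5) yields $c>0$ with $f>0$ on $(0,c)$, so $F(s)>0$ for every $s>0$ (use $F(s)=\int_0^s f$ directly when $s\le c$, and monotonicity of $F$ together with $F(c)>0$ when $s>c$, which holds because $f\ge0$ by (f1)). Consequently $G(t):=\int_B kF(tu_0)\,dx$ is continuous, satisfies $G(0)=0$, and $G(t)>0$ for every $t>0$.

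Second, I would set $h(t):=\tfrac{t^2}{2}\|u_0\|^2-\tfrac{t^{2^*}}{2^*}\int_B u_0^{2^*}\,dx$, whose maximum $M:=\max_{t\ge0}h(t)$ is finite, and fix $T>0$ large enough that $\|Tu_0\|>\rho$ and $h(T)<0$. Since $I_\lambda(tu_0)\le h(t)$ for all $t,\lambda\ge0$, the curve $\gamma(s):=sTu_0$ belongs to $\Gamma$ for every $\lambda>0$, giving $c_\lambda\le \max_{t\in[0,T]}\bigl[h(t)-\lambda G(t)\bigr]$. Note that $T$ does not depend on $\lambda$.

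Finally, the conclusion $c_\lambda<S^{N/2}/N$ for $\lambda$ large is obtained by contradiction. If $c_{\lambda_n}\ge S^{N/2}/N$ along some $\lambda_n\to\infty$, then there exist $t_n\in[0,T]$ with $h(t_n)-\lambda_n G(t_n)\ge S^{N/2}/N$. Since $\lambda_n G(t_n)\ge0$, this forces $h(t_n)\ge S^{N/2}/N>0$, so $(t_n)$ lies in a compact subinterval $[a,b]\subset(0,T)$ bounded away from $0$; simultaneously $\lambda_n G(t_n)\le M-S^{N/2}/N$, so $G(t_n)\to0$. Passing to a subsequence with $t_n\to t_*\in[a,b]$, continuity of $G$ gives $G(t_*)=0$, contradicting $G>0$ on $(0,\infty)$. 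The only delicate point is the strict positivity of $G$ on $(0,\infty)$, which is exactly where (f5) together with the placement of $\mathrm{supp}\,u_0$ inside $\{k>0\}$ is used; the rest is a routine compactness argument.
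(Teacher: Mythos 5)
Your proof is correct and takes essentially the same route as the paper: choose a radial test function supported in an annulus where $k>0$, observe that the $\lambda$-term is strictly positive along the ray by (f5), and conclude that the mountain pass level along that ray drops below $S^{N/2}/N$ as $\lambda\to\infty$. The only difference is in bookkeeping: the paper tracks the exact maximizer $t_\lambda$ of $t\mapsto I(tu)$ and uses the critical-point identity to show $t_\lambda\to0$ (hence $c_\lambda\le t_\lambda^2\|u\|^2/2\to0$), while you bound $c_\lambda\le\max_{t\in[0,T]}\bigl[h(t)-\lambda G(t)\bigr]$ and run a compactness/contradiction argument using $G>0$ on $(0,\infty)$; both hinge on the same use of (k4) and (f5) and are equally valid.
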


\begin{proof} Since $k(x_0)>0$ by (k4), there exist constants $0<r_1<|x_0|<r_2<1$ such that $k>0$ on $\overline{B(0,r_2)}\setminus B(0,r_1)$. Then we choose a radial function $u\in C_0^{\infty}(B)\setminus\{0\}$ such that $u\ge0$ and $\text{supp}\{u\}\subset \overline{B(0,r_2)}\setminus B(0,r_1)$. Then by Lemma \ref{lem:c1}, we have a constant $t_\lambda>0$ such that $I(t_\lambda u)=\max_{t>0}I(tu)$. Since $\frac{d}{dt}|_{t=t_\lambda}I(tu)=0$, we get 
\[
\|u\|^2-t_\lambda^{2^*-2}\int_Bu_+^{2^*}dx-\lambda\int_B\frac{kf(t_\lambda u)u}{t_\lambda}dx=0
\]
It follows that $t_\lambda\to0$ as $\lambda\to \infty$. If not, there exists a sequence $(\lambda_n)\subset (0,\infty)$ such that $\lambda_n\to \infty$ and $t_{\lambda_n}\to t_0>0$ for some value $t_0>0$ as $n\to \infty$. But this is impossible in view of the previous formula and (f5). Then it follows from (k1) and (f1) that
\[
\begin{split}
c_\lambda\le I(t_\lambda u)\le t_\lambda^2\|u\|^2\to 0
\end{split}
\]
as $\lambda\to \infty$. This finishes the proof.
\end{proof}

Then we prove the existence assertions of main theorems.

\begin{proof}[Proof of Theorem \ref{thm:c}] 
First note that under the assumption in Lemma \ref{lem:c1} \tcred{and} the mountain pass theorem (\cite{AR}, see also Theorem 2.2 in \cite{BN}), 
there exists a (PS)$_{c_\lambda}$ sequence $(u_n)\subset H_r(B)$ of $I$. Hence our aim is to see that $(u_n)$ has a subsequence which strongly converges in $H_r(B)$. 
\tcred{This fact} follows from Lemmas \ref{lem:c1}, \ref{lem:c2}\tcred{,} \ref{lem:c30} and \ref{lem:c3}\tcred{, which proves (i).}
The proof of (ii) is completed by Lemmas \ref{lem:c1}, \ref{lem:c2} and \ref{lem:c4}.  This completes the proof \tcred{of Theorem \ref{thm:c}.} 
\end{proof}

\begin{proof}[Proof of Corollary \ref{thm:0}] 
The proof is clear from (i) of Theorem \ref{thm:c}. Here we remark on (g1) and (g2). 
We first note that \tcred{non-negativity} of $k$ in (k1) is \tcred{needed} only to apply the maximum principle. 
Hence it is clear that in the present case it can be weakened to $g\ge-1$ in (g1). 
Furthermore, by (g1), the associated energy functional
\[
I(u)=\frac12 \|u\|^2-\frac{1}{2^*}\int_B(1+g)|u|^{2^*}dx
\]  
is always well-defined. Hence we can weaken (k2) in Theorem \ref{thm:c} to the condition $k(0)=0$. 
Finally, in the present case, since we do not assume $k(|x|)=O(|x|^\beta)$ for $\beta>0$, in principle, we cannot use Lemma \ref{N} directly in the proof of Lemma \ref{lem:c2}. 
Although the modification is trivial, we will give the modified proof in Appendix \ref{ap} for \tcred{the} readers' convenience. 
\end{proof}

\begin{proof}[Proof of Corollary \ref{thm:1}] 
The proof is immediate by (ii) of Theorem \ref{thm:c}. 
\end{proof}
\section{Nonexistence results}\label{sec:2}

In this section, we prove the nonexistence results by the Pohozaev identity. 
Since some results still hold true for the star-shaped domain, we first consider the problem
\begin{equation}
\begin{cases}
-\Delta u=|u|^{2^*-2}u+g|u|^{q-1}u\text{ in }\Omega\\
u=0\text{ on }\partial \Omega,
\end{cases}\label{eq:non}
\end{equation}
where $\Omega\subset \mathbb{R}^N$ with $N\ge3$ is a bounded smooth domain, $q\ge1$ and $g$ is \tcred{a} $C^1$ function. 
\tcred{Now, let us recall the formula}
\begin{equation}
\int_\Omega\left\{\frac{x\cdot \nabla g}{q+1}+\left(\frac{N}{q+1}-\frac{N-2}{2}\right)g\right\}|u|^{q+1}dx=\frac12\int_{\partial \Omega}(x\cdot \nu)|\nabla u|^2ds_x
\label{eq:pov}
\end{equation}
\tcred{holds} for any solution $u\in C^1(\overline{\Omega})$. 
\tcred{This is the Pohozaev identity for \eqref{eq:non}.}

\begin{theorem}
Let $\lambda\in \mathbb{R}$ and  $g(x)=\lambda |x|^\beta$ with $\beta\ge0$. Then if $\Omega$ is a star-shaped domain, \eqref{eq:non} has no  $C^1$ solution if either one of the following holds;
\begin{enumerate}\label{thm:non2}
\item[(i)] $\lambda\le0$ and $q\le (N+2+2\beta)/(N-2)$ or,
\item[(ii)]  $\lambda\ge0$ and  $q\ge (N+2+2\beta)/(N-2)$ or otherwise,
\item[(iii)] $\beta=0$, $\lambda\in\mathbb{R}$ and $q=(N+2)/(N-2)$.
\end{enumerate}
\end{theorem}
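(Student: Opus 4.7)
The plan is to apply identity \eqref{eq:pov} directly with $g(x)=\lambda|x|^\beta$. A short computation gives $x\cdot\nabla g=\lambda\beta|x|^\beta$, whence
\[
\frac{x\cdot\nabla g}{q+1}+\left(\frac{N}{q+1}-\frac{N-2}{2}\right)g=\lambda\cdot\frac{N+2+2\beta-(N-2)q}{2(q+1)}|x|^\beta=:K(q,\beta,\lambda)|x|^\beta,
\]
so that \eqref{eq:pov} reduces to
\[
K(q,\beta,\lambda)\int_\Omega|x|^\beta|u|^{q+1}\,dx=\frac{1}{2}\int_{\partial\Omega}(x\cdot\nu)|\nabla u|^2\,ds_x.
\]
Since $\Omega$ is star-shaped about the origin, $x\cdot\nu\ge 0$ on $\partial\Omega$, so the right-hand side is nonnegative.

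Next I would inspect the sign of the factor $N+2+2\beta-(N-2)q$ in each regime: in (i) it is $\ge 0$ while $\lambda\le 0$, so $K\le 0$; in (ii) it is $\le 0$ while $\lambda\ge 0$, so again $K\le 0$; in (iii) both $\beta=0$ and $(N-2)q=N+2$ hold, hence $K\equiv 0$. In every case the identity forces both sides to vanish. Whenever $K\neq 0$ strictly (i.e.\ $\lambda\neq 0$ and $q$ lies strictly on the correct side of $(N+2+2\beta)/(N-2)$), one immediately obtains $\int_\Omega|x|^\beta|u|^{q+1}\,dx=0$, which gives $u\equiv 0$ by continuity.

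The remaining borderline configurations are those with $K=0$: either $\lambda=0$, or $q=(N+2+2\beta)/(N-2)$, or the critical situation (iii). In these cases the degenerate identity only yields $\int_{\partial\Omega}(x\cdot\nu)|\nabla u|^2\,ds_x=0$. Because $\Omega$ is star-shaped about the origin, $x\cdot\nu$ is strictly positive on a relatively open subset of $\partial\Omega$, so both $u$ and $\partial_\nu u$ vanish there. One then concludes $u\equiv 0$ by the unique continuation principle applied to the semilinear elliptic equation $-\Delta u=|u|^{2^*-2}u+\lambda|x|^\beta|u|^{q-1}u$; for the pure critical case $\lambda=0$ this is the classical Pohozaev--Rellich argument.

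The main obstacle I expect is precisely this borderline step: the sign analysis alone only gives RHS $=0$, and one must invoke unique continuation (or a Hopf-type argument for strictly star-shaped $\Omega$) to conclude triviality of $u$. Apart from that, the proof is a routine specialization of the preceding Pohozaev identity, and Theorem \ref{thm:non0} follows as the particular case $\Omega=B$.
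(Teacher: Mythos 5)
Your proposal is correct and follows essentially the same route as the paper: specialize \eqref{eq:pov} to $g=\lambda|x|^\beta$, observe that the resulting coefficient is nonpositive in each of (i)--(iii) while the boundary term is nonnegative by star-shapedness, and conclude $u\equiv 0$ via unique continuation. Your additional observation that in the strict-inequality sub-cases the vanishing of $\int_\Omega|x|^\beta|u|^{q+1}\,dx$ already forces $u\equiv 0$ without unique continuation is a small refinement; the paper simply invokes unique continuation uniformly.
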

\begin{proof} Let $u\in C^1(\overline{\Omega})$ be a solution of \eqref{eq:non}. Then under the assumption in the theorem, we get by \eqref{eq:pov} that
\[
\lambda\int_\Omega\left(\frac{\beta+N}{q+1}-\frac{N-2}{2}\right)|x|^\beta |u|^{q+1}dx=\frac12\int_{\partial \Omega}(x\cdot \nu)|\nabla u|^2ds_x.
\]
Then if one of (i)-(iii) holds, the left hand side is nonpositive. On the other hand, since $x\cdot \nu\ge0$ by our assumption, we have $|\nabla u|\equiv 0$ on $\partial \Omega$. Then from the principle of unique continuation we must have $u\equiv 0$ in $\Omega$. This shows the proof. 
\end{proof}

\begin{proof}[Proof of Theorem \ref{thm:non0}] 
The proof is a direct consequence of Theorem \ref{thm:non2}.
\end{proof}

Lastly let us show the proof of Theorem \ref{thm:non1}. 
To do this, we assume $q\ge1$ and  $u=u(r)$ $(r\in[0,1])$ is a solution of 
\begin{equation}
\begin{cases}
-u''-\displaystyle\frac{(N-1)}{r}u'=|u|^{\frac{4}{N-2}}u+g|u|^{q-1}u\text{ in }(0,1),\\
u'(0)=0=u(1).
\end{cases}\label{rad2}
\end{equation}
with a $C^1$ function $g(r)$ on $[0,1]$. In addition, we suppose $\psi(r)$ $(r\in[0,1])$ is a smooth test function such that $\psi(0)=0$. Then we have the following. (See \cite{BN} and also \cite{GG}.) 
\begin{theorem}\label{thm:p} If $u$ is a solution of \eqref{rad2}, we get
\begin{equation}
\begin{split}
\psi(1)|u'(1)|^2&=\frac12\int_0^1 u^2 r^{N-4}\left\{r^3\psi'''-(N-1)(N-3)r \psi'+(N-1)(N-3)\psi\right\}dr\\
&+\frac{2(N-1)}N\int_0^1|u|^{2^*}(r^{N-1}\psi'-r^{N-2}\psi)dr\\
&\hspace{-5em}+\frac{1}{q+1}\int_0^1|u|^{q+1}\left\{(q+3)gr^{N-1}\psi'-(q-1)(N-1)gr^{N-2}\psi+2g' r^{N-1}\psi\right\}dr.
\label{f3}
\end{split}
\end{equation}
\end{theorem}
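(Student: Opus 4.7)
My plan is to derive \eqref{f3} by testing the radial equation \eqref{rad2} against a carefully chosen multiplier built from $\psi$, $u$ and $u'$, then integrating by parts repeatedly on $(0,1)$. It is convenient to rewrite \eqref{rad2} in the divergence form $-(r^{N-1}u')'=r^{N-1}(|u|^{2^*-2}u+g|u|^{q-1}u)$, since every boundary contribution at $r=0$ will then be killed either by the $r^{N-1}$ factor (together with $u'(0)=0$) or by the hypothesis $\psi(0)=0$; the only surviving boundary term will be at $r=1$, where $u(1)=0$ reduces everything to the prescribed $\psi(1)|u'(1)|^2$.

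The first ingredient is a Pohozaev-type multiplier. I would multiply the equation by $\psi(r)u'(r)r^{N-1}$ and integrate. The left-hand side contains $\int_0^1 u''u'\psi r^{N-1}\,dr=\tfrac12\int_0^1 (u'^2)'\psi r^{N-1}\,dr$, which after one integration by parts produces the boundary term $\tfrac12\psi(1)u'(1)^2$ plus a bulk integral of $u'^2$ against a first-order differential expression in $\psi$. On the right-hand side the derivatives $(|u|^{2^*}/2^*)'$ and $(g|u|^{q+1}/(q+1))'$ integrate by parts to yield the coefficient combinations of $|u|^{2^*}$ and $|u|^{q+1}$ that appear in \eqref{f3}; the summand $2g'r^{N-1}\psi/(q+1)$ is in particular the trace of $g'|u|^{q+1}$ that comes from differentiating the product $g|u|^{q+1}$.

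Since \eqref{f3} contains no $u'^2$ term, the second ingredient is an auxiliary multiplier of the form $h(r)u(r)$, with $h$ a suitable linear combination of $r^{N-1}\psi'$ and $r^{N-2}\psi$, chosen so that when the equation is multiplied by $hu$ and integrated by parts, the resulting $\int u'^2$ contribution cancels the one produced in the previous step. After this cancellation, the remaining $\int uu'$ and $\int uu''$ integrals are treated with two further integrations by parts, which pushes three derivatives onto $\psi$ and produces an integral of the shape $\int u^2\omega(r)\,dr$ with $\omega(r)=\tfrac12 r^{N-4}\bigl[r^3\psi'''-(N-1)(N-3)r\psi'+(N-1)(N-3)\psi\bigr]$. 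The coefficients $(N-1)(N-3)$ are precisely those produced by the conjugation $u\mapsto r^{(N-1)/2}u$ that eliminates the first-order term from the radial Laplacian, and they serve as the arithmetic consistency check.

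The main obstacle is purely organizational: choosing the coefficient of the auxiliary multiplier $h$ so that the $u'^2$ bulk integrals cancel exactly, and then tracking the cascade of integrations by parts so that the boundary contributions at $r=0$ vanish (this is where $\psi(0)=0$ and $N\ge3$ enter through the $r^{N-4}$-weighted integral being locally integrable) and those at $r=1$ collapse to the single term $\psi(1)u'(1)^2$. Once $h$ is fixed, reading off the coefficients of $|u|^{2^*}$ and $|u|^{q+1}$ in \eqref{f3} is a short calculation that uses $N/2^*=(N-2)/2$ and the product rule for $(g|u|^{q+1})'$.
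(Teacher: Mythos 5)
Your proposal is correct and follows essentially the same route as the paper's proof: the paper multiplies \eqref{rad2} first by $r^{N-1}\psi u'$ and then by $\bigl(r^{N-1}\psi'-(N-1)r^{N-2}\psi\bigr)u$, integrates by parts, and combines the two resulting identities so that the $|u'|^2$ integrals cancel — precisely the Pohozaev-type multiplier followed by the auxiliary multiplier $h(r)u$ with $h=r^{N-1}\psi'-(N-1)r^{N-2}\psi$ that you describe. Your remark on the conjugation $u\mapsto r^{(N-1)/2}u$ as the source of the $(N-1)(N-3)$ coefficients is a useful consistency check but is not needed for, nor used in, the direct computation.
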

\begin{proof}
Multiplying the equation in \eqref{rad2} by $r^{N-1}\psi u'$ gives
\begin{equation}
\begin{split}
&\psi(1)|u'(1)|^2-\int_0^1|u'|^2\left\{r^{N-1}\psi'-(N-1)r^{N-2}\psi\right\}dr\\
&=\frac{N-2}{N}\int_0^1|u|^{2^*}\left\{r^{N-1}\psi'+(N-1)r^{N-2}\psi\right\}dr\\
&\ \ \ +\frac{\lambda(q+1)}{2}\int_0^1|u|^{q+1}\left\{g' r^{N-1}\psi+r^{N-1}g\psi'+(N-1)r^{N-2}g\psi\right\}dr\tcred{.}
\label{f1}
\end{split}
\end{equation}
On the other hand, we multiply the equation in \eqref{rad2} by $(r^{N-1}\psi'-(N-1)r^{N-2}\psi )u$ and compute
\begin{equation}
\begin{split}
&\int_0^1|u'|^2\left\{r^{N-1}\psi'-(N-1)r^{N-2}\psi\right\}dr\\
&-\frac12\int_0^1u^2\left\{r^{N-1}\psi'''+(N-1)(N-3)r^{N-4}(\psi-r\psi')\right\}dr\\&=\int_0^1 |u|^{2^*}\left\{r^{N-1}\psi'-(N-1)r^{N-2}\psi\right\}dr\\
&\ \ \ +\lambda\int_0^1 g(r)|u|^{q+1}\left\{r^{N-1}\psi'-(N-1)r^{N-2}\psi\right\}dr.
\end{split}
\label{f2}\end{equation}
Combining \eqref{f1} and \eqref{f2}, we complete the proof.
\end{proof}

\begin{proof}[Proof of \tcred{Theorem} \ref{thm:non1}] 
The first assertion follows from Theorem \ref{thm:non2}.  
Let us prove the second assertion. 
To do this, assume $\lambda>0$ and \tcred{$u = u(r)$ $(r \in [0,1])$} is a radially symmetric solution of \eqref{p}. 
Then it satisfies
\begin{equation}
\begin{cases}
-u''-\displaystyle\frac{(N-1)}{r}u'=(1+g)|u|^{\frac{4}{N-2}}u\text{ in }(0,1),\\
u'(0)=u(1)=0,
\end{cases}\label{rad}
\end{equation}
where we put $g(r)=\lambda r^{\beta}$. 
Again choose a smooth test function $\psi$ such that $\psi(0)=0$. Then by Theorem \ref{thm:p}, we have 
\begin{equation}
\begin{split}
&\frac12\int_0^1 u^2 r^{N-4}\left\{r^3\psi'''-(N-1)(N-3)r \psi'+(N-1)(N-3)\psi\right\}dr\\
&=\psi(1)|u'(1)|^2 \\
&+\frac1N\int_0^1|u|^{2^*}\left\{-(N-2)g' r^{N-1}\psi+2(N-1)(1+g(r))(r^{N-2}\psi-r^{N-1}\psi')\right\}dr.
\label{e3}
\end{split}
\end{equation}
\tcred{We} fix $\beta\ge N-2$ and then select $\psi(r)=ar^{N-1}+br$ so that $r^3\psi'''-(N-1)(N-3)r \psi'+(N-1)(N-3)\psi=0$ and $\psi(0)=0$. 
This ODE has an explicit solution $\psi(r)=ar^{N-1}+br+cr^{-(N-3)}$ where $a,b,c\in \mathbb{R}$ are arbitrary constants. 
Since we assume $\psi(0)=0$, we must have $c=0$, i.e., $\psi(r)=ar^{N-1}+br$. 
Then we get
\begin{equation}
\begin{split}
&\psi(1)|u'(1)|^2 \\
&+\frac1N\int_0^1|u|^{2^*}\left\{-(N-2)g' r^{N-1}\psi+2(N-1)(1+g(r))(r^{N-2}\psi-r^{N-1}\psi')\right\}dr=0.
\label{e4}
\end{split}
\end{equation} 
Substituting $\psi(r)=ar^{N-1}+br$ into 
\[
	h(r):=-(N-2)k' r^{N-1}\psi+2(N-1)(1+k)(r^{N-2}\psi-r^{N-1}\psi'), 
\]
we \tcred{see}
\begin{align*}
&h(r)=r^{2N-3} \times \\
&\times \left[-\lambda a (N-2)\left\{2(N-1)+\beta\right\}r^{\beta}-\lambda b\beta (N-2) r^{\beta-N+2}-2a(N-1)(N-2)\right].
\end{align*}
Finally, we choose $a<0$ and $b=|a|>0$. In particular, we have $\psi(1)=a+b=0$. 
Then some elementary calculations show that if we set
\[
\lambda_*=\begin{cases}\frac{2(N-1)}{N-2}\text{ if }\beta=N-2,\\
                       \frac{2(N-1)}{N-2}\left(\frac{2N-2+\beta}{\beta-N+2}\right)^{\frac{\beta-N+2}{N-2}}\text{ if }\beta>N-2,
        \end{cases}
\]
we \tcred{assure that} $h\not=0$ and $h\ge0$ for all $\lambda\in[0,\lambda_*]$. 
Therefore in view of \eqref{e4}, we \tcred{reach to} a contradiction if $\lambda\in[0,\lambda_*]$. 
This finishes the proof.
\end{proof}
\appendix

\section{Critical case}\label{ap}

In this appendix, we give a proof of Lemma \ref{lem:c2} under the assumption in Corollary \ref{thm:0} for \tcred{the} readers' convenience. 
Especially we will use only the condition (g2) which is weaker than (k2).
\begin{lemma}\label{compact}
Assume (g1), (g2) and $(u_n)\subset H_r(B)$ is a (PS)$_c$ sequence of
\[
I(u)=\frac{1}{2}\|u\|^2-\frac{1}{2^*}\int_B(1+g)u_+^{2^*}dx\tcred{.}
\]
Then if $c<S^{\frac{N}{2}}/N$, $(u_n)$ has a subsequence which strongly converges in $H_r(B)$. 
\end{lemma}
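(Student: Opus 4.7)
The plan is to mirror the proof of Lemma \ref{lem:c2}, with the critical coefficient $(1+g)$ taking the role of the subcritical compact perturbation $\lambda k f(u)$. The crucial structural input is continuity of $g$ on $\overline B$ together with $g(0)=0$: any Dirac mass of the weak limit of $(u_n)_+^{2^*}$ at the origin is neutralized by the coefficient. Boundedness of $(u_n)$ is immediate from the identity
\[
I(u_n) - \tfrac{1}{2^*}\langle I'(u_n),u_n\rangle = \tfrac{1}{N}\|u_n\|^2,
\]
combined with the (PS)$_c$ assumption; no Ambrosetti--Rabinowitz-type hypothesis is needed since the nonlinearity is exactly critical. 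Extracting a subsequence, I obtain a weak limit $u\in H_r(B)$ with $u\ge 0$ a.e., and $I'(u)=0$ upon passing to the limit via weak $L^{2N/(N+2)}$-compactness of $(u_n)_+^{2^*-1}$ and a.e.\ convergence.

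Next, apply Lions' concentration-compactness lemma; radiality forces the atomic part to concentrate at the origin, producing constants $\mu_0,\nu_0\ge 0$ with
\[
|\nabla u_n|^2 \rightharpoonup d\mu \ge |\nabla u|^2 + \mu_0\delta_0,\qquad (u_n)_+^{2^*} \rightharpoonup d\nu = u^{2^*} + \nu_0\delta_0,
\]
and $S\nu_0^{2/2^*}\le \mu_0$. I then test $I'(u_n)$ against $u_n\phi$, where $\phi$ is a smooth radial cut-off with $\phi \equiv 1$ on $B_\e(0)$, $\operatorname{supp}\phi\subset B_{2\e}(0)$, and $|\nabla\phi|\le 2/\e$. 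Rellich compactness on $B_{2\e}$ combined with absolute continuity of $\int u^{2^*}$ handles the cross-term $\int u_n\nabla u_n\cdot\nabla\phi\,dx$ in the double limit $n\to\infty$ then $\e\to0$. The critical term, via measure convergence applied to the continuous test function $(1+g)\phi$, tends to
\[
\int_B (1+g)\phi\, u^{2^*}\,dx + (1+g(0))\phi(0)\nu_0 = \int_B(1+g)\phi u^{2^*}\,dx + \nu_0,
\]
where I used $g(0)=0$ and $\phi(0)=1$. Letting $\e\to 0$ kills the remaining integral and yields $\mu_0\le \nu_0$, which combined with the Sobolev bound forces either $\nu_0=0$ or $\nu_0\ge S^{N/2}$.

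The main obstacle is excluding the alternative $\nu_0\ge S^{N/2}$. Using $1+g\ge 0$ from (g1), $g(0)=0$, and weak measure convergence tested against $1+g\in C(\overline B)$, I estimate
\[
c = \lim\Bigl(I(u_n) - \tfrac12\langle I'(u_n),u_n\rangle\Bigr) = \tfrac{1}{N}\lim\int_B (1+g)(u_n)_+^{2^*}\,dx \ge \tfrac{\nu_0}{N} \ge \tfrac{S^{N/2}}{N},
\]
contradicting the hypothesis $c<S^{N/2}/N$. Hence $\nu_0=0$. Testing the weak measure convergence against the constant $1\in C(\overline B)$ gives $\int(u_n)_+^{2^*}\to\int u^{2^*}$, and the Brezis--Lieb lemma upgrades this to strong $L^{2^*}(B)$ convergence. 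Finally, $\langle I'(u_n),u_n\rangle\to 0$ yields $\|u_n\|^2 \to \int(1+g)u^{2^*}\,dx = \|u\|^2$, which together with weak convergence in the Hilbert space $H_r(B)$ gives the desired strong convergence $u_n\to u$.
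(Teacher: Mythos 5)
Your argument is correct and follows essentially the same route as the paper's proof in the appendix: boundedness from the Ambrosetti--Rabinowitz-free identity $I(u_n)-\tfrac{1}{2^*}\langle I'(u_n),u_n\rangle=\tfrac1N\|u_n\|^2$, Lions concentration-compactness with the atom forced to the origin by radiality, the cut-off test yielding $\mu_0\le\nu_0$ after using $g(0)=0$, the energy identity $c=\tfrac1N\lim\int(1+g)(u_n)_+^{2^*}\ge\nu_0/N$ ruling out $\nu_0\ge S^{N/2}$, and norm-plus-weak convergence in the Hilbert space to finish. The only cosmetic difference is that you invoke Brezis--Lieb for the $L^{2^*}$ upgrade before passing to the norm convergence, whereas the paper compresses this into ``a standard argument,'' but the substance is identical.
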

\begin{proof} From the definition we have
\[
\begin{split}
c+o(1)&=I(u_n)-\frac{1}{2^*}\langle I'(u_n),u_n\rangle+o(1)\|u_n\|\\
        &\ge \frac1N \|u_n\|^2+o(1)\|u_n\|.
\end{split}
\]
This implies $(u_n)$ is bounded in $H_r(B)$. Then  we can assume tcred{that} there exists a nonnegative function $u\in H_r(B)$ such that
\[
\begin{cases}
u_n\rightharpoonup u\text{ weakly in }H_r(B),\\
u_n\to u\text{ a.e. on }B,
\end{cases}
\]
up to a subsequence. 
\tcred{By} the concentration compactness lemma, we can suppose that there exist values $\mu_0,\nu_0\ge0$ such that
\[
\begin{cases}
|\nabla u_n|^2 \rightharpoonup d\mu \ge |\nabla u|^2+\mu_0\delta_{k},\\
u_n\to u\text{ in }L^{p}(B)\text{ for all }p\in(1,2N/(N-2)),\\
(u_n)_+^{2^*}\rightharpoonup d\nu =u_+^{2^*}+\nu_0 \delta_{0},
\end{cases} 
\]
in the measure sense, where $\delta_{0}$ denotes the Dirac delta measure concentrated at the origin with mass $1$ as before.  
Furthermore, we have
\begin{equation}
S\nu_0^{\frac{2}{2^*}}\le\mu_0.\label{c1}
\end{equation}
We show $\nu_0=0$. 
To this end, we assume $\nu_0>0$ on the contrary. 
Then, for small $\e>0$, we define a smooth test function $\phi$ as in the proof of Lemma \ref{lem:c2}. 
Since $I'(u_n)\to 0$ in $H^{-1}(B)$ and $(u_n)$ is bounded, we have 
\[
\begin{split}
0&=\lim_{n\to \infty}\langle I'(u_n),u_n\phi\rangle\\
&=\lim_{n\to \infty}\left(\int_B\nabla u_n \tcred{\cdot} \nabla (u_n\phi)dx-\int_B (1+g)(u_n)_+^{2^*}\phi dx\right)\\
&=\lim_{n\to \infty}\left(\int_B|\nabla u_n|^2\phi dx-\int_B (1+g)(u_n)_+^{2^*}\phi dx+\int_B u_n\nabla u_n \tcred{\cdot} \nabla \phi dx\right)\\
&= \int_{\overline{B}}\phi d\mu-\int_{\overline{B}}(1+g)\phi d\nu+o(1)
\end{split}
\]
where $o(1)\to 0$ as $\varepsilon\to 0$. Taking $\varepsilon\to 0$ and noting $g(0)=0$, we obtain
\[
0\ge\mu_0-\nu_0.
\]
Then using \eqref{c1}, we get 
\[
\nu_0\ge S^{\frac N2}.\label{c2}
\]
Finally, noting this estimate, we see
\[
\begin{split}
c&=\lim_{n\to \infty}\left(I(u_n)-\frac12\langle I'(u_n),u_n\rangle\right)\\
 &=\frac{1}{N}\lim_{n\to \infty}\int_{\overline{B}}(1+g)d\nu\\
  &\ge \frac{S^{\frac{N}{2}}}{N}
\end{split}
\]
since $g(0)=0$, which is a contradiction. It follows that
\[
\lim_{n\to \infty}\int_B (1+g)(u_n)_+^{2^*}dx=\int_B (1+g)u_+^{2^*}dx.
\]
Then a standard argument shows that $u_n\to u$ in $H_r(B)$. This completes the proof.
\end{proof}

\subsection*{Acknowledgment} 
This work is inspired by the talk by Prof. Ai and Prof. Cowan at AMS sectional meeting at \tcred{Vanderbilt University} in April, 2018. 
The authors thank to them for their favorable discussion on this note. 
\tcred{This work is partly supported by Osaka City University Advanced Mathematical Institute (MEXT Joint Usage/Research Center on Mathematics and Theoretical Physics).
The first author (D.N.) is also supported by JSPS Kakenhi 17K14214 Grant-in-Aid for Young Scientists (B). The second author (F.T.) is also supported by JSPS Kakenhi 19136384 Grant-in-Aid for Scientific Research (B).}


\begin{thebibliography}{99}
\bibitem{A} S. Ai:
{\em Multipulse-like orbits for a singularly perturbed nearly integrable system}, 
\newblock J. Differential Equations {\bf 179}, (2002), 384--432.

\bibitem{AC} A. Ai, and C. Cowan:
{\em Critical elliptic equations via a dynamical system approach},
\newblock Nonlinear Anal. {\bf 182}, (2019), 97--112


\bibitem{AR} A. Ambrosetti, and P.H. Rabinowitz:
{\em Dual variational methods in critical point theory and applications},
\newblock J. Funct. Anal. {\bf 14}, (1973), 349--381 

 
\bibitem{Ba}
A. Bahri: 
{\em Critical points at infinity in some variational problems},
\newblock Pitman Research Notes in Mathematics Series, {\bf 182}. Longman Scientific and Technical, Harlow; copublished in the United States with John Wiley and Sons, Inc., New York, 1989. vi+I15+307 pp.

\bibitem{BC} A. Bahri, and J.-M. Coron: 
{\em On a nonlinear elliptic equation involving the critical Sobolev exponent: the effect of the topology of the domain},
\newblock Comm. Pure Appl. Math. {\bf 41}, (1988), no. 3, 253--294.

\bibitem{BN} H. Brezis, and L. Nirenberg:
{\em Positive solutions of nonlinear elliptic equations involving critical Sobolev exponents}, 
\newblock Comm. Pure Appl. Math. {\bf 36}, (1983), 437--477 

\bibitem{CFM} P. \tcred{Cl\'ement}, D.G. de Figueiredo, and E. Mitidieri:
{\em Quasilinear elliptic equations with critical exponents},
\newblock Topol. Methods Nonlinear Anal. {\bf 7}, (1996) 133--170.


\bibitem{C} J.M. Coron:
{\em Topologie et cas limite des injections de Sobolev,
 (French. English summary) [Topology and limit case of Sobolev embeddings]},
\newblock 
C. R. Acad. Sci. Paris S\'er. I Math. {\bf 299}, (1984), no. 7, 209--212.

\tcred{
\bibitem{Ding}
W. Ding:
{\em Positive solutions of $\Delta u + u^{\frac{(n+2)}{(n-2)}} = 0$ on contractible domains,}
\newblock J. Partial Differential Equations. {\bf 2}, (1989), 83-88.
}


\bibitem{FGM} D.G. de Figueiredo, J.V. \tcred{Gon\c{c}alves}, and O.H. Miyagaki:
{\em On a class of quasilinear elliptic problems involving critical exponents},
\newblock Commun. Contemp. Math.  {\bf 2}, (2000), 47--59.

  
\bibitem{GG} F. Gladiali, and M. Grossi:
{\em Linear perturbations for the critical Hénon problem},
\newblock Differential Integral Equations, {\bf 28}, (2015), 733--752.

 
\bibitem{L} P.L. Lions,:
{\em The concentration-compactness principle in the calculus of variations. The limit case. Part 1.},
\newblock Rev, Mat. Iberoamericana, {\bf 1}, (1985), 145--201.

\bibitem{N} W. Ni:
{\em A nonlinear Dirichlet problem on the unit ball and its applications},
\newblock Indiana Univ. Math. J., {\bf 31}, (1982), no. 6, 801--807.

\tcred{
\bibitem{Pas}
D. Passaseo:
{\em Multiplicity of positive solutions of nonlinear elliptic equations with critical Sobolev exponent
in some contractible domains,} 
\newblock Manuscripta Math. {\bf 65}, (1989), 147--165.
}

\bibitem{P} S.I. Pohozaev,:
{\em Eigenfunctions of the equation $\Delta u+\lambda f(u)=0$}, 
\newblock Soviet Math. Dokl. {\bf 6}, (1965), 1408--1411 
 
\bibitem{Ta} G. Talenti:
{\em Best constant in Sobolev inequality},
\newblock Ann. Mat. Pure Appl. {\bf 110}, (1976), 353--372 
  
\bibitem{W} X. Wang:
{\em Existence of positive solutions to nonlinear elliptic equations involving critical Sobolev exponents}, 
\newblock A Chinese summary appears in Acta Math. Sinica, {\bf 36}, (1993), no. 4, 574. 
Acta Math. Sinica (N.S.),  {\bf 8} (1992), 273--291.

\end{thebibliography}
\end{document}